\documentclass[conference]{IEEEtran}
\IEEEoverridecommandlockouts
\usepackage{cite, graphicx, xcolor, enumerate, hyperref, amssymb}
\def\BibTeX{{\rm B\kern-.05em{\sc i\kern-.025em b}\kern-.08em
    T\kern-.1667em\lower.7ex\hbox{E}\kern-.125emX}}
\definecolor{NCSUred}{RGB}{153, 0, 0}
\definecolor{NCSUgreen}{RGB}{0, 132, 115}
\definecolor{NCSUblue}{RGB}{65, 86, 161}
\definecolor{NCSUorange}{RGB}{209, 73, 5}

\hypersetup{colorlinks=true, breaklinks=true, plainpages=false, citecolor=NCSUblue, linkcolor=NCSUblue, urlcolor=NCSUorange}
\usepackage{amsmath, amssymb, amsthm, amsfonts}
\usepackage[notext]{stix}
\usepackage[cal=rsfso, bb=ams]{mathalpha}
\usepackage{algorithmic} 
\newtheorem{theorem}{Theorem}
\newtheorem{definition}{Definition}

\newtheorem{lemma}{Lemma}

\newtheorem{problem}{Problem}
\newcommand{\mc}[1]{\mathcal{#1}}
\newcommand{\ms}[1]{\mathsf{#1}}
\newcommand{\mr}[1]{\mathrm{#1}}

\newcommand{\mbb}[1]{\mathbb{#1}}

\newcommand{\rg}[1]{\mathring{#1}}
\newcommand{\mR}{\mathbb{R}}

\newcommand{\mN}{\mathbb{N}}
\newcommand{\xD}[1]{\mr{d} #1}

\newcommand{\bra}[1]{\left( #1 \right)}

\newcommand{\norm}[1]{\left\| #1 \right\|}
\newcommand{\ip}[2]{\left\langle #1, \, #2 \right\rangle}
\newcommand{\vk}{\varkappa}

\newcommand{\vf}{\varphi}

\begin{document}
\title{Koopman Modeling and Stabilization of Discrete-Time Nonlinear Control Systems: Bilinearity on a Reproducing Kernel Hilbert Space \\  
\thanks{The work is supported by NSF under CBET Award \#2414369. All simulation codes for the numerical experiments in this paper are available at this hyperlinked \href{https://github.com/XiuzhenYe/Koopman-Modeling-and-Controller-Synthesis-for-Bilinear-Systems-on-Reproducing-Kernel-Hilbert-Space}{GitHub repository}.}}
\author{\IEEEauthorblockN{Jarod Morris, Xiuzhen Ye, Wentao Tang}
\IEEEauthorblockA{\textit{Department of Chemical and Biomolecular Engineering, North Carolina State University}\\
Raleigh, North Carolina, U.S.A. \\ 
\texttt{\{jemorri4, xye7, wtang23\}@ncsu.edu}}
% \and
}

\maketitle
\begin{abstract}
Despite the popularity of Koopman modeling for nonlinear systems, in the presence of input variables, the evident nonexistence of a fully linear time-invariant model even in infinite dimensions makes Koopman-based control largely an open problem to date. 
Focusing on discrete-time systems in this paper, which eschews from using operator semigroup and infinitesimal generator notions, it is proven that nonlinear systems, if satisfying appropriate smoothness and regularity conditions, can be expressed \emph{exactly as bilinear dynamics}, when the state variables and input variables are separately lifted into their reproducing kernel Hilbert spaces (RKHSs). 
To account for the knowledge of an equilibrium point at the origin, the RKHS is defined by a linear--radial product kernel, and hence the functions belonging to this RKHS are spanned by the multiplications of component functions and Sobolev functions. 
The stabilization problem, namely the determination of a feedback law that causes a Lyapunov function (expressed as a kernel sum-of-squares form) to decrease, is then posed as an infinite-dimensional optimization problem over state-dependent conditional probability measures over the input space, solved via a discretization scheme. 
\end{abstract}

\begin{IEEEkeywords}
Koopman operator, operator learning, infinite-dimensional systems, reproducing kernel Hilbert space
\end{IEEEkeywords}

\section{Introduction}
The interest in Koopman operator theory once gave a hope that nonlinear control problems, if using some state lifting, can be converted to that for a linear time-invariant (LTI) system \cite{proctor2018generalizing, korda2018linear}. 
The shortcoming in this ambiguous LTI postulate and its misleading effect on later literature is evident --- given a nonlinear system with inputs in discrete time: $x_{t+1} = f(x_t, u_t)$, hereafter simply written as 
\begin{equation}\label{eq:system}
    x^+ = f(x, u), \enspace f: \mR^n \times \mR^m \to \mR^n,
\end{equation}
there is clearly no finite-dimensional ``lifting'' $z = \phi(x)$ such that $z^+=Az+Bu$ can be satisfied for some matrices $A$ and $B$. 
Indeed, since the Koopman operator $K: g\mapsto g\circ f$ for the autonomous system $x^+ = f(x)$ generally must be defined on an infinite-dimensional function space, except for very restrictive cases \cite{shang2026existence}, the Koopman model for \eqref{eq:system} cannot be a finite-dimensional LTI one. 
In fact, even in infinite dimensions, an LTI representation is absent, as the effect of inputs on the lifted states cannot be expected to be a constant gain. 
Such an observation has been well pointed out by practitioners \cite{bruder2021advantages}, which motivated the use of bilinear Koopman models \cite{goswami2021bilinearization} among other types of customized linear Koopman models, e.g., for Volterra systems or Hammerstein systems \cite{hou2026behavioral}. 

\par The bilinearity can be interpreted in the following way. Considering the Koopman operator defined for \eqref{eq:system}, $K: g\mapsto g\circ f$, it maps from a space of state-dependent functions into a state-and-input-dependent functions; for the latter space, any function should be spanned by features of states and inputs altogether. 
Formally, this can be expressed in the language of reproducing kernel Hilbert spaces (RKHS) --- in Bevanda et al. \cite{bevanda2024nonparametric}, the \emph{control Koopman operator} is defined as a mapping between a state-RKHS and a tensor product of state-RKHS and input-RKHS. 
Another approach to formulate a bilinear Koopman model is to use continuous-time input-affine systems $\dot{x} = f(x) + G(x)u$, where, upon taking a continuously differentiable lifting map $z=\phi(x)$, gives $\dot{z} = (\mr{D}\phi\cdot f)(x) + (\mr{D}\phi\cdot G)(x)u$. This route, however, often cannot evade from theoretical inconsistencies,\footnote{In continuous time, $\phi\mapsto \mr{D}\phi\cdot f$ and $\phi\mapsto \mr{D}\phi\cdot G$ must be understood as infinitesimal generators of operator semigroups, and these operators are themselves unbounded and only densely defined. The meaning of a bilinear system $\dot{z} = Az+ \sum_j u_j(B_jz)$ on infinite-dimensions turns out to be ambiguous, without a proper definition for mild solutions when $A$ and $B$ are unbounded operators, possibly deefined on different spaces \cite{iacob2024koopman}. Some recent literature that aimed at using bilinear Koopman models for control eluded from such discussions, and retreated to the assumption of a finite-dimensional exact embedding \cite{strasser2024koopman}. Given that discrete-time Koopman operator models are more computationally friendly, one tends to actually deal with discrete-time systems. However, when discretizing the time, input affinity cannot be preserved. \label{fn:1}} 
and therefore we leave the case of continuous-time systems to future discussions.

\par To the end of a universal Koopman representation of discrete-time system \eqref{eq:system}, we believe that the following aspects should be rigorously addressed without ambiguity. 

\paragraph{Choice of spaces of state-dependent and state-and-input-dependent functions}
The domain and codomain of the Koopman operator $\mc{G}_x \ni g$ and $\mc{G}_{(x,u)}\ni g\circ f$ should be so that $K$ is indeed a bounded linear operator. For the learning purpose, these spaces are desired to be RKHSs. 
The condition for the operator $K$ of an autonomous systems to be a bounded linear operator on a \emph{Sobolev-type RKHS} (generated by a Sobolev radial kernel) was given in \cite{kohne2025error}. 
Noting that equilibrium point plays a special role in dynamical systems and needs to be preserved in the Koopman model for stability analysis, our prior work \cite{tang-ye2025koopman} used a linear--radial kernel (a linear kernel multiplied by a Sobolev kernel) to define a RKHS, and gave corresponding conditions for the Koopman operator to be bounded. 
These conditions are easy to extend to non-autonomous system \eqref{eq:system}, if simplistically viewing $(x,u)$ as a joint variable \cite{tang-ye2026dissipativity}. 

\paragraph{Separation of state features and input features}
Viewing $(x, u)$ as a joint variable in the dynamics $f$, as considered in \cite{tang-ye2026dissipativity}, however, restricts the capacity of handling $u$ as a decision variable. 
The limitation remains if the inputs are regarded only as parameters in a parameterized Koopman operator family \cite{haseli2025two}. 
The tensor product formulation, which allows the features of states and the features of inputs to be separated into their own spaces, is desirable and has been adopted in some relevant papers (e.g., \cite{bevanda2024kernel, lazar2025product}, as well as \cite{tang2025koopman} considering the closed-loop system using ``policy kernels''). 
The conditions for meaningfully defining an infinite-dimensional Koopman operator model involving the tensor product RKHS, however, remains to be clarified. In particular, we are interested in finding the conditions for an exact bilinear Koopman model on RKHSs, rather than a ``misspecified'' approximate model (e.g., from RKHS to $L^2$) as in \cite{klus2020eigendecompositions, kostic2022learning, bevanda2023koopman, hou2024nonparametric}. 
Clearly, to handle problems such as multi-step prediction and control, it is necessary to establish the operator on the same function space. 

\paragraph{Explicit solution of typical control problems}
Due to the nonlinearity of the input feature map, the control problems of \eqref{eq:system} remain intrinsically nonlinear and cannot na{\"{i}}vely follow a linear-quadratic control formulation. 
As a result, by using receding horizon optimization as an implicit control law, existing literature \cite{bevanda2024kernel, bold2025kernel} focused on model predictive control (MPC) formulations for bilinear Koopman models. 
In such settings, recursive feasibility and closed-loop stability guarantees can be deduced only via \emph{ad hoc} assumptions on cost functions and terminal constraint. Essentially, the MPC approach forgoes the \emph{synthesis of an explicit controller}, which ought to be, at least partially, the rationale for seeking a Koopman model of control systems. 
To the authors' knowledge, a rigorous controller \emph{synthesis} approach based on a general Koopman model does not yet exist. 

\par The aim of this paper is to establish that the system \eqref{eq:system}, under mild regularity assumptions on the dynamics $f$, can be represented by an \emph{exact bilinear Koopman model} (see equation \eqref{eq:bilinear}), where the state $x$ and input $u$ are separately lifted into their canonical features in RKHSs associated with linear--radial kernels \cite{tang-ye2025koopman}), and the infinite-dimensional model involves the Kronecker product of their features that reside in the tensor product RKHS. 
The identification of such a bilinear model naturally lends to a kernel regression routine in the spirit of \cite{kostic2022learning}. 
Furthermore, under the bilinear model \eqref{eq:bilinear}, a stabilizing feedback law is sought by searching a probability distribution over the input space, dependent on the state, that renders a Lyapunov function, expressed as a kernel sum-of-squares (SOS) form, to decay. The problem turns out to be convex and therefore computationally tractable by approximation, whose solution provides a \emph{stabilizing controller synthesis} method.

\section{Preliminaries}\label{sec:preliminaries}

\subsection{RKHS and Sobolev Kernel}
\par We call a continuous function $\kappa:\mbb{X}\times\mbb{X}\rightarrow\mR$ a \emph{kernel} on $\mbb{X}$ if for any finite set of points $\left\{ x_i \right\}_{i=1}^N \subset \mbb{X}$, the matrix defined as $G_\kappa=\left[ \kappa\left(x_i, x_j\right) \right]_{i,j=1}^N$ is symmetric and positive semidefinite. The RKHS associated with $\kappa$ is defined as
\begin{equation*}
    \mathcal{H}_\kappa(\mbb{X}) = \overline{\operatorname{span}} \left\{ \kappa(x,\cdot) : x \in \mathbb{X} \right\},
\end{equation*}
and its elements $\phi_x:=\kappa(x,\cdot)$ are called kernel functions or canonical features. The inner product of $\mc{H}_\kappa(\mbb{X})$ is defined as $\ip{\kappa(x,\cdot)}{\kappa(x',\cdot)}=\kappa(x,x')$, and consequently the induced norm is $\lVert \kappa(x,\cdot)\rVert^2=\kappa(x,x)$. The RKHS has the \emph{reproducing kernel} property: for any $g\in\mc{H}_\kappa(\mbb{X})$, the point evaluation at any point $x\in\mbb{X}$ can be written as the inner product with a kernel function, i.e., $g(x)=\ip{g}{\kappa(x,\cdot)}$. Similarly, a kernel function function $\vk:\mbb{U}\times\mbb{U}\rightarrow\mR$ and its corresponding RKHS can be defined over the input space $\mbb{U}$, denoted as $\mc{H}_\vk(\mbb{U})$.

\par A useful type of Hilbert space is a Sobolev--Hilbert space $H^s(\mbb{X}):=W^{s,2}(\mbb{X})$, which is the space of such functions that have generalized derivatives up to $s$-th order, all of which being square-integrable over $\mbb{X}$. 
This regularity is important for Koopman analysis, because Koopman operator is known to be well-defined on the Sobolev space under smoothness and nondegeneracy conditions on the dynamics \cite{kohne2025error}. 
\begin{lemma}[Wendland \cite{wendland2004scattered}]\label{lem:wendland}
    Let $\kappa$ be a radial kernel on $\mbb{X}$, that is, a kernel such that $\kappa(x,x')=\rho\left(\lvert x-x'\rvert\right)$ for some $\rho:\mR_+\rightarrow\mR_+$ with a Fourier transform, $\hat \rho (\xi)$, that satisfies $c_1\left(1+\lvert \xi \rvert^2 \right)^{-s/2} \leq |\hat \rho (\xi)| \leq c_2 \left( 1+\lvert \xi \rvert^2 \right)^{-s/2}$ for some constants $c_2 \geq c_1 > 0$ for all $\xi \in \mR$. If $\mbb{X}$ further has a Lipschitz boundary, then $W^{s,2}(\mbb{X})=\mc{H}_\kappa(\mbb{X)}$ up to an equivalence of norms.
\end{lemma}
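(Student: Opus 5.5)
The plan is the standard Fourier-analytic route: first identify the native space of $\kappa$ on all of $\mR^n$ with $H^s(\mR^n)$, and then pass to the bounded Lipschitz domain $\mbb{X}$ by restriction and extension. Throughout, $\xi\in\mR^n$; the scalar $\mR$ in the statement is read as $\mR^n$, and $\hat\rho(\xi)$ means the $n$-dimensional Fourier transform of $x\mapsto\rho(\lvert x\rvert)$.

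\textbf{Step 1 (whole space).} The kernel $\Phi(x-x')=\rho(\lvert x-x'\rvert)$ is translation invariant. Because $\hat\rho$ is continuous, integrable (this needs $s>n/2$, which is implicit in the hypothesis) and positive, Bochner's theorem gives the characterization of the native space
\begin{equation*}
\mc{H}_\kappa(\mR^n)=\left\{ g\in L^2(\mR^n)\cap C(\mR^n): \norm{g}_{\kappa}^2=(2\pi)^{-n/2}\int \frac{\lvert\hat g(\xi)\rvert^2}{\hat\rho(\xi)}\,\xD{\xi}<\infty \right\}.
\end{equation*}
To prove this, I check on the dense span of the $\kappa(x,\cdot)$ that this integral reproduces $\ip{\kappa(x,\cdot)}{\kappa(x',\cdot)}=\kappa(x,x')$, using inverse Fourier transformation of $\hat\rho$. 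I then verify that the integral space is complete and that it has the reproducing property. Comparing with $\norm{g}_{H^s}^2=\int(1+\lvert\xi\rvert^2)^{s}\lvert\hat g\rvert^2\,\xD{\xi}$, the two-sided bound on $\hat\rho$ gives
\begin{equation*}
c_2^{-1}\norm{g}_{H^s}^2\lesssim\norm{g}_\kappa^2\lesssim c_1^{-1}\norm{g}_{H^s}^2 .
\end{equation*}
Hence $\mc{H}_\kappa(\mR^n)=H^s(\mR^n)$ with equivalent norms, and continuity comes from Sobolev embedding.

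\textbf{Step 2 (restriction to $\mbb{X}$).} The RKHS of the restricted kernel on $\mbb{X}$ consists exactly of the restrictions $g|_{\mbb{X}}$ with $g\in\mc{H}_\kappa(\mR^n)$. Its norm is the minimal extension norm,
\begin{equation*}
\norm{h}_{\mc{H}_\kappa(\mbb{X})}=\inf\left\{\norm{g}_\kappa : g|_{\mbb{X}}=h\right\},
\end{equation*}
which is the standard Aronszajn restriction theorem. Likewise, $W^{s,2}(\mbb{X})$ carries the restriction/quotient norm from $H^s(\mR^n)$, up to equivalence.

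\textbf{Step 3 (extension operator).} This is the main obstacle. We need a bounded extension operator $E:W^{s,2}(\mbb{X})\to H^s(\mR^n)$, and for that I invoke Stein's extension theorem for Lipschitz domains, which is valid for all $s\ge 0$, including fractional $s$ via interpolation. It gives
\begin{equation*}
\norm{h}_{\mc{H}_\kappa(\mbb{X})}\le\norm{Eh}_\kappa\lesssim\norm{Eh}_{H^s}\lesssim\norm{h}_{W^{s,2}(\mbb{X})}.
\end{equation*}
Conversely, any extension $g$ satisfies $\norm{h}_{W^{s,2}(\mbb{X})}\le\norm{g}_{H^s(\mR^n)}\lesssim\norm{g}_\kappa$, and taking the infimum over extensions gives the reverse inequality. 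The two spaces therefore coincide as sets with equivalent norms.

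The only nontrivial ingredients are the Fourier characterization in Step 1 and Stein extension in Step 3. The Lipschitz boundary hypothesis enters only through Step 3.
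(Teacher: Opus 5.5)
Your route is the standard argument behind the result the paper cites from Wendland: Fourier characterization of the native space on $\mR^n$, then Aronszajn restriction, then an extension operator for the Lipschitz domain. The paper gives nothing beyond that citation. You also correctly repaired the typo $\xi\in\mR$.

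However, Step 1 does not follow from the hypothesis as printed. The lemma assumes $\hat\rho(\xi)\asymp(1+\lvert\xi\rvert^2)^{-s/2}$. Then $1/\hat\rho(\xi)\asymp(1+\lvert\xi\rvert^2)^{s/2}$, and your own formula gives $\norm{g}_\kappa^2\asymp\int(1+\lvert\xi\rvert^2)^{s/2}\lvert\hat g(\xi)\rvert^2\,\xD{\xi}=\norm{g}_{H^{s/2}}^2$, not $\norm{g}_{H^s}^2$.

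The lower bound $\norm{g}_{H^s}^2\lesssim\norm{g}_\kappa^2$ that you display is therefore false. Take $g$ with $\hat g$ concentrated near $\lvert\xi\rvert\approx R$: the ratio $\norm{g}_{H^s}^2/\norm{g}_\kappa^2$ grows like $R^{s}$. Your integrability remark has the same problem. With exponent $-s/2$, $\hat\rho\in L^1$ forces $s>n$, not $s>n/2$.

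In effect you silently replaced the exponent $-s/2$ by $-s$ and proved a different statement. Read literally, the lemma is false: it yields $\mc{H}_\kappa(\mbb{X})=W^{s/2,2}(\mbb{X})$, not $W^{s,2}(\mbb{X})$.

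You need to make the correction explicit. The intended hypothesis is Wendland's $c_1(1+\lvert\xi\rvert^2)^{-s}\le\hat\rho(\xi)\le c_2(1+\lvert\xi\rvert^2)^{-s}$ for $\xi\in\mR^n$. The paper tacitly uses this form later, when it takes Wendland functions with $s=(d_x+d_u)/2+k+1/2$; their Fourier transforms decay with exactly that power of $1+\lvert\xi\rvert^2$.

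Under the corrected hypothesis, Steps 1--3 go through as you wrote them. Alternatively, keep the printed hypothesis and weaken the conclusion to $W^{s/2,2}(\mbb{X})$. In particular, your observation then holds that $s>n/2$ is forced by Bochner's theorem together with the lower bound on $\hat\rho$. For non-integer $s$, the extension step does need the fractional version of the extension theorem (via interpolation), as you indicate.
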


\par Suppose $\kappa:\mbb{X}\times\mbb{X}\rightarrow\mR$ and $\vk:\mbb{U} \times \mbb{U} \rightarrow \mR$ are kernels satisfying the conditions in Lemma \ref{lem:wendland}. Then we define $\mc{H}_x:=\mc{H}_\kappa(\mbb{X})$ and $\mc{H}_u:=\mc{H}_\vk(\mbb{U})$ as the Sobolev--Hilbert spaces over the state and input spaces.
We also introduce the notion of an adjoint operator. Consider a well-defined, bounded linear operator $T: \mc{G}\rightarrow\mc{H}$ between Hilbert spaces. The adjoint operator, denoted $T^*:\mc{H}\to\mc{G}$, is defined by the relation $\ip{T^*h}{g}=\ip{h}{Tg}$ for all $g\in \mc{G}$ and $h\in\mc{H}$. 
It is easily provable that the adjoint of the Koopman operator, denoted $K^*$ and also called the Perron--Frobenius operator, in the case of an autonomous system $x^+=f(x)$, gives
\begin{equation*}
    K^*\rg{\kappa}(x,\cdot) = \rg{\kappa}(f(x),\cdot), \qquad \forall x\in\mbb{X}.
\end{equation*}

\subsection{Tensor Product and Linear--Radial Kernel}
\par Sobolev-type RKHSs are useful for Koopman analysis as they provide a regular function space on which composition with dynamics can be a well-defined, bounded linear operator. However, a radial Sobolev kernel is insensitive to the special role of an \emph{equilibrium point} (at the origin, without loss of generality). 
Specifically, if the kernel has the form of $\kappa(x,x')=\rho\left(\vert x-x'\rvert\right)$, then $\left\lVert \kappa(x,\cdot) \right\rVert_{\mc{H}_x}^2 = \kappa(x,x) = \rho(0)$ for every $x\in \mbb{X}$. Hence the canonical feature at each point has the same norm, and the origin in $\mbb{X}$ does not correspond to the origin in the RKHS. The same undesirable obstruction appears in the controlled setting.

\par We recall the tensor product spaces of Hilbert spaces, which will be used to both define the linear--radial kernel and the state--input feature used in the bilinear Koopman representation. 
Suppose that $\mc{H}_1$ and $\mc{H}_2$ are two Hilbert spaces. Their tensor product is defined as
\begin{equation*}
    \mc{H}_1 \otimes \mc{H}_2 = \overline{\operatorname{span}}\left\{ g \otimes h: g\in\mc{H}_1, ~ h\in\mc{H}_2\right\},
\end{equation*}
with the elementary tensors of the form $g\otimes h$ are ordered pairs in $\mc{H}_1\times \mc{H}_2$ endowed with an inner product $\ip{g_1 \otimes h_1}{g_2 \otimes h_2} = \ip{g_1}{g_2}_{\mc{H}_1} \ip{h_1}{h_2}_{\mc{H}_2}$, and then extended by linearity and completion. 
When $\mc{H}_1$ and $\mc{H}_2$ are further RKHSs, $\mc{H}_{\kappa_1}(\Omega_1)$ and $\mc{H}_{\kappa_2}(\Omega_2)$, respectively, their tensor product is again an RKHS, whose reproducing kernel is simply the product kernel $\kappa((\omega_1, \omega_2), (\cdot,\cdot))=\kappa_1(\omega_1, \cdot) \kappa_2(\omega_2, \cdot)$. That is, $\mc{H}_{\kappa_1}(\Omega_1)\otimes \mc{H}_{\kappa_2}(\Omega_2) = \mc{H}_\kappa(\Omega_1\times\Omega_2)$. 
If $\Omega_1=\Omega_2=\Omega$, we can just force $\omega_1=\omega_2=\omega\in\Omega$, namely define $\kappa = \kappa_1\kappa_2$ and then $\mc{H}_{\kappa_1}(\Omega)\otimes \mc{H}_{\kappa_2}(\Omega) = \mc{H}_\kappa(\Omega)$.  

\par Using this construction, the linear--radial kernel was introduced in our previous work \cite{tang-ye2025koopman}. Denote $\kappa_{\mr{sob}}(x,x')=\rho\left(\vert x - x' \rvert\right)$ as the radial Sobolev kernel on $\mbb{X}$. We define the linear kernel $\kappa_\mr{lin}(x,x') = x^\top x'$ and form the product kernel 
\begin{equation*}
    \rg\kappa(x,x')=\kappa_\mr{lin}(x,x')\kappa_{\mr{sob}}(x,x') = \left( x^\top x' \right) \rho\left(\lvert x - x'\rvert\right).
\end{equation*}
\begin{lemma}[Tang \& Ye \cite{tang-ye2025koopman}]
    The function $\rg\kappa$ defines an RKHS over $\mbb{X}$, denoted $\rg{\mc{H}}_x$, can be expressed as:  
    $$ \rg{\mc{H}}_x=\left\{ \sum_{k=1}^n e_kg_k : g_k \in \mc{H}_x \right\}, \norm{ \sum_{k=1}^n e_kg_k }_{\rg{\mc{H}}_x}^2 = \sum_{k=1}^n \norm{g_k}_{\mc{H}_x}^2, $$
    where $e_k(x)=x_k$ ($k=1,\cdots,n)$ are the projection maps.  
\end{lemma}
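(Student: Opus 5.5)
We view $\rg\kappa$ as a sum of $n$ product kernels and use the standard facts on sums and products of reproducing kernels. First, $\rg\kappa$ is a kernel: $\kappa_{\mr{lin}}$ is positive semidefinite because its Gram matrix is $X^\top X$. $\kappa_{\mr{sob}}$ is positive semidefinite by assumption (its Fourier transform is positive). By the Schur product theorem, the Hadamard product of the two Gram matrices is positive semidefinite. Continuity and symmetry are immediate. So $\rg\kappa$ defines an RKHS $\rg{\mc{H}}_x$ by the Moore--Aronszajn construction recalled in Section~\ref{sec:preliminaries}.

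For the explicit description, write
\begin{equation*}
\rg\kappa(x,x') = \sum_{k=1}^n e_k(x)e_k(x')\,\kappa_{\mr{sob}}(x,x') = \sum_{k=1}^n \kappa_k(x,x'), \qquad \kappa_k(x,x') := e_k(x)\kappa_{\mr{sob}}(x,x')e_k(x').
\end{equation*}
Each $\kappa_k$ is a multiplier-transformed kernel. We define $M_k:\mc{H}_x\to\{\text{functions on }\mbb{X}\}$ by $M_kg = e_kg$. By the standard pullback/multiplier result, $\mathrm{ran}\,M_k$ with norm $\norm{e_kg}:=\min\{\norm{h}_{\mc{H}_x}: e_kh = e_kg\}$ is the RKHS of $\kappa_k$. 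The quickest check is that $\kappa_k(x,\cdot) = e_k(x)\,e_k\,\kappa_{\mr{sob}}(x,\cdot) = M_k(e_k(x)\kappa_{\mr{sob}}(x,\cdot))$, and then the reproducing property is verified on spans and extended by density.

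Next we invoke Aronszajn's sum theorem. The RKHS of $\sum_k\kappa_k$ is $\{\sum_k f_k : f_k\in\mc{H}_{\kappa_k}\}$ with norm $\norm{f}^2=\inf\sum_k\norm{f_k}^2$ over all decompositions. Equivalently, it is the image of the map
\begin{equation*}
A:\bigoplus_{k=1}^n\mc{H}_x\to\rg{\mc{H}}_x, \qquad (g_1,\dots,g_n)\mapsto\sum_k e_kg_k,
\end{equation*}
with the quotient norm. Concretely, one checks that $A^*$ sends $\rg\kappa(x,\cdot)$ to $(e_k(x)\kappa_{\mr{sob}}(x,\cdot))_{k=1}^n$. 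Inner products then match: $\sum_k e_k(x)e_k(x')\kappa_{\mr{sob}}(x,x') = \rg\kappa(x,x')$. This gives the set equality $\rg{\mc{H}}_x = \{\sum_k e_kg_k\}$ and the norm formula as a minimum over representations, attained by the component in $(\ker A)^\perp$.

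The main obstacle is the literal norm identity $\norm{\sum_k e_kg_k}^2 = \sum_k\norm{g_k}^2$, which holds only if $A$ is injective. Otherwise the decomposition is not unique (for instance $e_1(e_2h) - e_2(e_1h)=0$ when $e_1h, e_2h \in \mc{H}_x$), and the identity must be read with the minimal-norm representative $(g_k)\in(\ker A)^\perp$. I would try to establish injectivity or identify $\ker A$ explicitly. If $\ker A$ is nontrivial, I would state the norm as the infimum, which the statement implicitly uses. This is the same as interpreting $\rg{\mc{H}}_x$ as the range of the partial isometry $A$, as in \cite{tang-ye2025koopman}.
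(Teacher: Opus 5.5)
Your argument is correct, and it is more careful than the justification the paper relies on. The paper does not prove this lemma; it imports it from \cite{tang-ye2025koopman}. The reasoning implicit in Section~\ref{sec:preliminaries} is the tensor-product route. The RKHS of $\kappa_{\mr{lin}}$ is the $n$-dimensional space of linear functions with orthonormal basis $e_1,\dots,e_n$, so $\mc{H}_{\kappa_{\mr{lin}}}\otimes\mc{H}_x$ consists of the elements $\sum_k e_k\otimes g_k$ with norm $\sum_k\norm{g_k}^2$, and one then ``forces'' the two arguments to coincide.

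Your route uses multiplier kernels plus Aronszajn's sum theorem, or equivalently realizes $\rg{\mc{H}}_x$ as the range of $A$ with the quotient norm. It uses the same underlying map: restricting $\mc{H}_{\kappa_{\mr{lin}}}\otimes\mc{H}_x\cong\bigoplus_k\mc{H}_x$ to the diagonal is exactly your $A$. But it exposes what the tensor identification hides, namely that diagonal restriction is only a coisometry. The tensor route gets the set equality in one line; yours also gets the norm right.

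You can settle the point you left open rather than trying to prove injectivity, because for $n\ge 2$ injectivity genuinely fails.
\begin{itemize}
\item First, $e_kg=0$ forces $g=0$ on the dense set $\{x\in\mbb{X}: x_k\neq 0\}$, hence everywhere by continuity. So each $M_k$ is injective, and $A$ is injective when $n=1$, where the displayed identity holds literally.
\item Second, since $\mbb{X}$ is compact with Lipschitz boundary, $\mc{H}_x\simeq H^s(\mbb{X})$ is invariant under multiplication by the smooth coordinate functions. Hence $(e_2h,-e_1h,0,\dots,0)\in\ker A$ for every $h\in\mc{H}_x$. For $h\neq 0$ this represents the zero function with $\sum_k\norm{g_k}^2\ge\norm{e_1h}^2>0$.
\end{itemize}
Thus for $n\ge 2$ the statement is true only in the form $\norm{f}^2=\min\{\sum_k\norm{g_k}^2 : f=\sum_k e_kg_k\}$, with the minimum attained on $(\ker A)^\perp$. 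This corrected form is also all that the later composition lemmas need, since they only use $\norm{\sum_k e_kh_k}^2\le\sum_k\norm{h_k}^2$ and can be applied to the minimal-norm representative of $g$.
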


\par Informally, this space comprises of functions that are \emph{locally  at least linear} near the equilibrium point, while still globally regular in the Sobolev sense. Since the linear factors now make the canonical feature vanish at the origin, we have $\rg{\phi}_0=0$, and in fact $\|\rg\phi_x\|\propto |x|$, since $\|\rg\phi_x\|^2 = \rg\kappa(x,x)=|x|^2\rho(0)$). 
This linear--radial RKHS is also defined over the input space $\mbb{U}$, denoted as $\mc{H}_u$ with a linear--radial product kernel $\rg{\vk}(u,u')=\vk_{\mr{lin}}(u,u')\vk_{\mr{sob}}(u,u')$ where $\vk_{\mr{sob}}(u,u')$ is a radial Sobolev kernel, and canonical features denoted as $\rg{\vf}_u = \rg{\vk}(u,\cdot) \in \rg{\mc{H}}_u$. 
Furthermore, in the next section, we will use the tensor product $\rg{\mc{H}}_x \otimes \rg{\mc{H}}_u$, which is an RKHS with kernel $((x, u), (x',u')) \mapsto \kappa(x,x') \varkappa(u,u')$. We denote the canonical feature of $(x,u)$ in this space as $\phi_x \otimes \varphi_u$.

\subsection{Kernel Quadratic Forms and Kernel SOS Forms}
\par The norm of a canonical feature satisfies $\lVert \rg{\phi}_x \rVert^2 = \rg{\kappa}(x,x)=\left( x^\top x \right) \rho(0)$, which scales with the $|x|^2$. This is clearly in contrast to the purely radial kernel, for which $\lvert \phi_x \rvert = \rho(0)$ for every $x$. Hence, the linear--radial RKHS is a natural space for constructing ``locally at least quadratic functions'', including Lyapunov candidates. 
We now define ``kernel quadratic forms'', used later for stabilization. In finite dimensions, a standard Lyapunov candidate is a quadratic form $v(x) = x^\top P x$, where $P$ is a postive semidefinte matrix. In the RKHS setting, the feature $\rg{\phi}_x$ plays the role of $x$ and a positive operator $P$ on the RKHS plays the commensurate role. For precision, the concepts of Hilbert--Schmidt operators are reviewed here.
\begin{definition}[Hilbert--Schmidt operators]
    A symmetric operator $Q=Q^*$ on a Hilbert space $\mc H$ is a \emph{Hilbert-Schmidt operator} if for an orthonormal basis $\left\{ u_i\right\}_{i\in\mN} \in \mc H$, we have
    \begin{equation*}
        \textstyle P=\sum_{i=1}^\infty \lambda_i u_i\times u_i
    \end{equation*}
    for some $\left\{ \lambda_i\right\}_{i\in\mN}\subset\mR$ with $\sum_i \lambda_i^2<\infty$. If, in addition, $\ip{h}{Ph} \geq 0$, $\forall h\in \mc H$, then $P$ is said to be positive and denoted as $P\in \mr{HS}_+(\mc H)$. 
\end{definition}
Now, given a positive Hilbert--Schmidt operator $P:\rg{\mc{H}}_x\rightarrow\rg{\mc{H}}_x$, we define the associated \emph{kernel quadratic form} as $x\mapsto \ip{\rg{\phi}_x}{P\rg{\phi}_x}$.
By the spectral decomposition in the above definition, it can be verified that $\ip{\rg{\phi}_x}{P\rg{\phi}_x} = \sum_{i=1}^\infty \alpha_iu_i(x)^2$, which is a (possibly infinite) \emph{sum of squared ``locally at least linear'' functions}.

\section{Bilinear Koopman Model}\label{sec:bilinear}
\par This section discusses the existence and data-driven estimation of a bilinear Koopman model for system \eqref{eq:system}. The advantages for such a model for control will become clear in the next section. Specifically, we are interested in when a Koopman operator $K: \rg{\mc{H}}_x \rightarrow \rg{\mc{H}}_x \oplus \rg{\mc{H}}_u \oplus \left( \rg{\mc{H}}_x \otimes \rg{\mc{H}}_u \right) : g \mapsto g \circ f$ is a well-defined, bounded linear operator. 
As such, its adjoint operator $M: \rg{\mc{H}}_x \oplus \rg{\mc{H}}_u \oplus \left( \rg{\mc{H}}_x \otimes \rg{\mc{H}}_u \right) \to \rg{\mc{H}}_x$ is \emph{a bilinear ``lifted model'' in infinite dimensions, mapping the state canonical feature, input canonical feature, and their tensor product to the canonical feature of the succeeding state}:
\begin{equation}\label{eq:bilinear}
    \rg\phi_{x^+} = \underbrace{\begin{bmatrix} M_1 & M_2 & M_3 \end{bmatrix}}_{M} \begin{bmatrix} \rg\phi_x \\ \rg\vf_u \\ \rg\phi_x\otimes \rg\vf_u \end{bmatrix} =: M\rg\psi_{(x,u)}.
\end{equation}
It is then shown how such a model can be estimated with data triplets $\left\{(u_i, x_i, y_i) \right\}_{i=1}^N$ where $y_i = f(x_i, u_i)$.

\subsection{Existence of A Bilinear Koopman Model}
Here we present the main result for the existence of an operator model in the form of \eqref{eq:bilinear}. 
\begin{theorem}\label{th:model}
    Assume that (i) $\mbb{X}\subset \mR^n$ and $\mbb{U}\subset \mR^m$ are compact and star-convex with respect to the origin, (ii) $f \in [\rg{C}^{s+1}(\mbb{X}\times\mbb{U})]^n$ with $s>(d_x+d_u)/2$, (iii) $f_x:=f(\cdot, 0)$ is non-degenerate: $\inf_{x\in \mbb{X}} |\operatorname{det} \mr{D}f_x(x)|>0$, and (iv) for every Borel set $S\subset \mbb{X}$ and $x\in \mbb X$, $\operatorname{mes} \{u\in \mbb U: f(x,u)\in S\}\leq c \operatorname{mes} S$ for some constant $c>0$. 
    Then there exists a bounded linear operator $M$ such that \eqref{eq:bilinear} holds for the system \eqref{eq:system}.  
\end{theorem}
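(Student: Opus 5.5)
The plan is to prove that the Koopman operator $K: g\mapsto g\circ f$ maps $\rg{\mc{H}}_x$ boundedly into $\mc{D}:=\rg{\mc{H}}_x \oplus \rg{\mc{H}}_u \oplus (\rg{\mc{H}}_x\otimes\rg{\mc{H}}_u)$, and then to take $M:=K^*$. The embedding of $\mc{D}$ into functions of $(x,u)$ is meant as follows: an element $(a,b,c)\in\mc{D}$ is read as the function $(x,u)\mapsto a(x)+b(u)+c(x,u)$, and $\rg\psi_{(x,u)}$ is the reproducing element for this evaluation. Once $K$ is bounded, the adjoint identity follows from the reproducing property. For every $g\in\rg{\mc{H}}_x$,
\[
\ip{g}{M\rg\psi_{(x,u)}}=\ip{Kg}{\rg\psi_{(x,u)}}=(g\circ f)(x,u)=\ip{g}{\rg\phi_{f(x,u)}},
\]
which gives \eqref{eq:bilinear}. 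So everything reduces to a decomposition of $g\circ f$ into these three pieces, with norm control.

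\textbf{Step 1 (decomposition via star-convexity).} Take $g\in\rg{\mc{H}}_x$ and set $h=g\circ f\in \mc{H}(\mbb{X}\times\mbb{U})$. I use a Hadamard-type splitting
\[
h(x,u)=h(x,0)+h(0,u)-h(0,0)+\bigl[h(x,u)-h(x,0)-h(0,u)+h(0,0)\bigr].
\]
The term $h(0,0)=g(f(0,0))$ vanishes, assuming the origin is an equilibrium, which is implicit in $f\in\rg{C}^{s+1}$. Star-convexity then gives integral representations:
\begin{itemize}
\item $h(x,0)=\sum_k x_k\int_0^1\partial_{x_k}h(tx,0)\,\xD{t}$, which lies in $\rg{\mc{H}}_x$;
\item $h(0,u)=\sum_j u_j\int_0^1 \partial_{u_j}h(0,tu)\,\xD{t}$, which lies in $\rg{\mc{H}}_u$;
\item the mixed remainder equals $\sum_{k,j}x_ku_j\int_0^1\!\!\int_0^1\partial_{x_k}\partial_{u_j}h(tx,\tau u)\,\xD{t}\,\xD{\tau}$.
\end{itemize}
By the structure lemma for $\rg{\mc{H}}$, $\rg{\mc{H}}_x\otimes\rg{\mc{H}}_u$ is spanned by $e_k e_j'\,G$ with $G\in\mc{H}_x\otimes\mc{H}_u$. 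The tensor product of Sobolev kernels gives a mixed-smoothness space, which contains $H^{2s}$-type functions on the product. This is why one extra derivative in each variable, together with $f\in C^{s+1}$, has to be budgeted. Boundedness of each averaging map $q\mapsto\int_0^1 q(t\cdot)\,\xD{t}$ on Sobolev spaces follows by differentiating under the integral, using compactness and star-convexity.

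\textbf{Step 2 (composition bounds).} I need $\norm{\partial h}$ in the relevant Sobolev norms to be $\lesssim\norm{g}_{\rg{\mc{H}}_x}$. Writing $g=\sum_k e_kg_k$ with $g_k\in\mc{H}_x=H^s$, the function $h=\sum_k f_k\cdot(g_k\circ f)$ is a product of a $C^{s+1}$ function and a composition. The pure-$x$ slice $g_k\circ f_x$ is bounded in $H^s$ by the change of variables of \cite{kohne2025error}, using hypothesis (iii). For the $u$- and joint dependence, the $L^2$ part of the composition norm, $\int\!\!\int |\partial^\alpha g_k(f(x,u))|^2\,\xD{u}\,\xD{x}$, is bounded via hypothesis (iv): for fixed $x$, the pushforward of Lebesgue measure on $\mbb U$ under $f(x,\cdot)$ has density at most $c$. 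The chain rule (Faà di Bruno), with derivatives of $f$ bounded up to order $s+1$, then controls all derivatives. The condition $s>(d_x+d_u)/2$ provides the Sobolev embedding into continuous functions, so point evaluations and the slices $u=0$, $x=0$ are well defined and bounded. Traces of the mixed derivative in Step 1 are also handled this way.

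\textbf{Main obstacle.} The hardest point is the regularity bookkeeping in Step 1's mixed term. Taking $\partial_{x_k}\partial_{u_j}$ costs derivatives, and I must show that the result, after averaging, lands in $\mc{H}_x\otimes\mc{H}_u$ rather than merely in $H^{s-1}$ of the product. My first attempt will use that $H^{s}(\mbb X\times\mbb U)$ with $s$ above the total dimension threshold, applied to $h\in C^{s+1}$-composed functions, embeds into the mixed space $H^{s/2}\otimes H^{s/2}$-type scale. I would adjust the Sobolev orders of $\kappa_{\mr{sob}}$ and $\vk_{\mr{sob}}$ in Lemma~\ref{lem:wendland} so that the extra derivative from $f\in C^{s+1}$ exactly compensates. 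If this fails, the fallback is to assume the Sobolev orders of the input and state kernels are chosen with enough margin.
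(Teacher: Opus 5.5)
Your architecture is the paper's. Setting $M:=K^*$ with $Kg=(g\circ f_x,\ g\circ f_u,\ g\circ f-g\circ f_x-g\circ f_u)$ is exactly $M^*=(\Lambda_1,\Lambda_2,\Lambda_3)$. This splitting is in fact forced, because elements of the three blocks vanish at $x=0$, at $u=0$, and on both axes, respectively. One of your steps is weaker than the paper's but can be repaired. The Hadamard formula $h(x,0)=\sum_k x_k\int_0^1\partial_{x_k}h(tx,0)\,\xD{t}$ only gives coefficients in $H^{s-1}$, so it does not show $h(\cdot,0)\in\rg{\mc{H}}_x$. The paper never divides by $x_k$. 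Writing $g=\sum_k e_kg_k$ and $f_k=\sum_i e_ip_{ki}+\sum_j e_{n+j}q_{kj}$ gives $g\circ f_x=\sum_i x_i\sum_k p_{ki}(\cdot,0)\,(g_k\circ f_x)$. Its coefficients lie in $H^s$ by the change of variables you already invoke. Likewise $g\circ f_u=\sum_j u_j\sum_k q_{kj}(0,\cdot)\,(g_k\circ f_u)$.

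The genuine gap is the mixed term. You flag it but do not close it, and neither of your remedies can.

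By the structure lemma, $\rg{\mc{H}}_x\otimes\rg{\mc{H}}_u=\{\sum_{k,j}x_ku_jG_{kj}:G_{kj}\in H^s(\mbb{X})\otimes H^s(\mbb{U})\}$. Here $H^s\otimes H^s$ is the mixed space requiring $\partial_x^\alpha\partial_u^\beta G\in L^2$ for all $|\alpha|,|\beta|\le s$, i.e.\ up to $2s$ derivatives in total. For general $g\in H^s$, the composition $g\circ f$ is no smoother than $g$, however smooth $f$ is. The deficit is $s$ derivatives of $g$, and no extra derivative of $f$ supplies them.

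Your embedding $H^s(\mbb{X}\times\mbb{U})\hookrightarrow H^{s/2}\otimes H^{s/2}$ is true. But using it replaces $\rg\phi_x\otimes\rg\vf_u$ in \eqref{eq:bilinear} by features of lower-order kernels, so it proves a different model. Your integrand $\partial_{x_k}\partial_{u_j}h$ also has only $s-2$ derivatives to begin with.

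You cannot borrow this step from the paper either. Its $\Lambda_3$ lemma bounds $\norm{\bar s_{kj}}_{H^s(\mbb{X})\otimes H^s(\mbb{U})}^2$ by $\sup_{|\alpha|\le s}\int|\partial^\alpha\bar s_{kj}|^2$. That would require $H^s(\mbb{X}\times\mbb{U})\subset H^s(\mbb{X})\otimes H^s(\mbb{U})$, but the inclusion goes the other way. The lemma also counts only $s$ derivatives of $g_r$, although $\bar s_{kj}$ already contains first derivatives of $g_r\circ f$.

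In fact the step is false under the stated hypotheses. Take $n=m=1$, $\mbb{X}=\mbb{U}=[-1,1]$ and $f(x,u)=(x+u)/2$, which satisfies (i)--(iv) with constant $2$. Write $g\circ f=a(x)+b(u)+w(x,u)$ with $(a,b,w)=M^*g$. Applying $\partial_x^s\partial_u^s$ annihilates $a$ and $b$, so $\partial_x^s\partial_u^s w=2^{-2s}g^{(2s)}((x+u)/2)$ would have to lie in $L^2$. That forces $g\in H^{2s}$ locally, which fails for $g(x)=x\,|x-\tfrac14|^{s-1/4}\in\rg{\mc{H}}_x$.

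Hence no bounded $M$ exists with the same $\rg\phi_x$ in the first and third blocks. A correct version must build the tensor block from a state kernel of strictly lower order than the one defining $\rg{\mc{H}}_x$. That is a different statement from the one you were asked to prove.
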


\par The trick to prove the theorem is to examine the composition of $g \in \rg{\mc{H}}_x$ with ``restricted'' controlled dynamics. In particular, the restriction $x \mapsto f(x,0)$ captured purely state-dependent contributions, and $u \mapsto f(0,u)$ captures purely input-dependent contributions. The remaining component, which depends jointly on $x$ and $u$, is then associated with the tensor product feature $\rg{\phi}_x\otimes\rg{\vf}_u$. 
In the same spirit as \cite{tang-ye2025koopman}, the dynamics must be assumed to be smooth and equilibrium-preserving, for which we define
\begin{align*}
    \begin{aligned}
        \rg{C}^s(\mbb{X}\times\mbb{U})
        =
        \Bigg\{
        \sum_{i=1}^n e_i p_i
        +
        \sum_{i=1}^m e_{n+i} q_i
        \;:&\;
        p_i,q_i \in C^s(\mbb{X}\times\mbb{U}) \Bigg\},
    \end{aligned}
\end{align*}
which can be seen as the space of $s$-smooth functions that are locally linear around the equilibrium point. 

\par The composition of an observable $g \in \rg{\mc{H}}_x$ with the dynamics restricted to solely state contributions behaves the same as the case of an autonomous system, which has been analyzed in our previous work \cite{tang-ye2025koopman}.
\begin{lemma}
    Suppose that (i) $\mbb{X} \subset \mR^n$ is compact, (ii) $f \in \rg{C}^s(\mbb{X}\times\mbb{U})$, and (iii) $\inf_{x\in\mbb{X}}\left\lvert \det \mr{D}f_x(x) \right\rvert > 0$. Then on $\rg{\mc{H}_x}$, $\Lambda_1:g \mapsto g \circ f_x$ is a bounded linear operator. 
\end{lemma}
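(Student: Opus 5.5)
The plan is to reduce boundedness of $\Lambda_1$ on $\rg{\mc{H}}_x$ to boundedness of composition operators on the Sobolev space $\mc{H}_x = W^{s,2}(\mbb{X})$. By the earlier lemma of Tang \& Ye, every $g\in\rg{\mc{H}}_x$ can be written as $g=\sum_{k=1}^n e_k g_k$ with $g_k\in\mc{H}_x$ and $\norm{g}^2_{\rg{\mc{H}}_x}=\sum_k\norm{g_k}^2_{\mc{H}_x}$; we take the decomposition achieving the norm, i.e., the minimal-norm representative. Then
\begin{equation*}
g\circ f_x=\sum_{k=1}^n (e_k\circ f_x)(g_k\circ f_x).
\end{equation*}
Because $f\in\rg{C}^s$ and $f_x = f(\cdot,0)$, each component satisfies $e_k\circ f_x = f_{x,k} = \sum_{i=1}^n e_i\, p_{k,i}(\cdot,0)$ with $p_{k,i}(\cdot,0)\in C^s(\mbb{X})$. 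The $q$-terms drop out, since they carry factors $e_{n+i}=u_i=0$. Substituting,
\begin{equation*}
g\circ f_x=\sum_{i=1}^n e_i h_i,\qquad h_i := \sum_{k=1}^n p_{k,i}(\cdot,0)\,(g_k\circ f_x).
\end{equation*}
So it suffices to show that each $h_i$ lies in $\mc{H}_x$ with $\norm{h_i}_{\mc{H}_x}\le C\sum_k\norm{g_k}_{\mc{H}_x}$. The lemma's characterization then gives $\norm{g\circ f_x}_{\rg{\mc{H}}_x}^2\le\sum_i\norm{h_i}^2\le C'\norm{g}^2_{\rg{\mc{H}}_x}$, since the norm is an infimum over representations. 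Linearity of $\Lambda_1$ is immediate.

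Two standard facts give the bound on $h_i$.
\begin{enumerate}
\item[(a)] \emph{Multiplication.} Multiplication by a $C^s(\mbb{X})$ function is bounded on $W^{s,2}(\mbb{X})$. This follows from the Leibniz rule, with all derivatives up to order $s$ of the multiplier bounded on compact $\mbb{X}$.
\item[(b)] \emph{Composition.} Composition with $f_x$ is bounded on $W^{s,2}(\mbb{X})$, as in K\"ohne et al.\ \cite{kohne2025error}. Via Fa\`a di Bruno, $\partial^\alpha(g_k\circ f_x)$ is a finite sum of terms $(\partial^\beta g_k)\circ f_x$, $|\beta|\le|\alpha|$, times polynomials in derivatives of $f_x$ of order $\le s$, which are bounded. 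The $L^2$ norm of $(\partial^\beta g_k)\circ f_x$ is controlled by the change of variables $y=f_x(x)$. Non-degeneracy $\inf|\det \mr{D}f_x|\ge\delta>0$ gives a Jacobian factor $\le\delta^{-1}$. Together with a bound on the multiplicity of preimages, this yields $\int_{\mbb{X}}|(\partial^\beta g_k)(f_x(x))|^2\xD{x}\le C\delta^{-1}\int|\partial^\beta g_k|^2$.
\end{enumerate}
Combining (a) and (b) bounds each $h_i$, and summing over $i$ gives the claim with $C'$ depending on $n$, $\delta$, and the $C^s$ norms of $f$.

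The main obstacle is the change-of-variables step in (b), for two reasons. First, $f_x$ need not be injective, and $f_x(\mbb{X})$ need not lie inside $\mbb{X}$, so $g_k$ must be defined on $f_x(\mbb{X})$. I would handle the range issue by working with a bounded Sobolev extension of $g_k$ to a neighborhood of $\mbb{X}$, which requires a Lipschitz domain as in Lemma~\ref{lem:wendland}, or else by implicitly assuming forward invariance. For injectivity, by the inverse function theorem and uniform non-degeneracy, $f_x$ is a local diffeomorphism with a uniform radius on compact $\mbb{X}$. A finite cover then bounds the number of preimages of any point, so the multiplicity constant is finite.

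Second, the Fa\`a di Bruno argument needs $f_x\in C^s$ to control top-order derivatives, which is supplied by assumption (ii) of the lemma. Exactly this argument is the one from \cite{tang-ye2025koopman}, so in the write-up I would cite it for (b) and present the factorization through $\rg{C}^s$ in detail.
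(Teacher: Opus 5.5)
Your proposal is correct and takes essentially the same route as the paper. The paper gives no separate proof of this lemma and defers to \cite{tang-ye2025koopman}, and its companion proof for $\Lambda_2$ has exactly your structure: write $g=\sum_k e_k g_k$, use the $\rg{C}^s$ factorization of $f$ so that a linear factor $e_i$ survives, then bound $g_k\circ f_x$ in $W^{s,2}$ by the chain rule and a change of variables. Your explicit handling of the minimal-norm representative, the range of $f_x$ (forward invariance or Sobolev extension), and the finite preimage multiplicity obtained from uniform non-degeneracy fills in points the paper leaves implicit.
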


\par A similar result can also be proven in a similar manner for the composition $g \mapsto g \circ f_u$, where $f_u:=f(0,\cdot)$.
\begin{lemma}
    Suppose that (i) $\mbb{U}\subset \mR^m$ is compact, (ii) $f \in [\rg{C}(\mbb{X}\times\mbb{U})]^n$, and (iii) for any Borel set $S \subset \mbb{X}$, $\operatorname{mes}f_u^{-1}(S) \leq c \operatorname{mes} S$ for some constant $c>0$. Then $\Lambda_2 : \rg{\mc{H}}_x \rightarrow \rg{\mc{H}}_u : g \mapsto g \circ f_u$ is a bounded linear operator.
\end{lemma}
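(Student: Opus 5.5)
We need to prove the last lemma: $\Lambda_2: g\mapsto g\circ f_u$ is bounded from $\rg{\mc{H}}_x$ to $\rg{\mc{H}}_u$. The idea is to reduce the claim, via the structural lemma for the linear--radial space, to a statement about Sobolev spaces and then treat the composition there.

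\emph{Step 1: reduction.} Write $g=\sum_{k=1}^n e_k g_k$ with $g_k\in\mc{H}_x$ and $\norm{g}^2=\sum_k\norm{g_k}^2$. Since $f\in[\rg{C}(\mbb{X}\times\mbb{U})]^n$, each component restricted to $x=0$ has the form $f_{u,k}(u)=\sum_{j=1}^m u_j q_{kj}(0,u)$. Hence
\[
g\circ f_u=\sum_{k=1}^n f_{u,k}\,(g_k\circ f_u)=\sum_{j=1}^m e_j\Big(\sum_{k=1}^n q_{kj}(0,\cdot)\,(g_k\circ f_u)\Big).
\]
By the same structural lemma applied on $\mbb{U}$, with $\mc{H}_u=W^{s,2}(\mbb{U})$ via Lemma~\ref{lem:wendland}, it suffices to show that each bracketed function lies in $W^{s,2}(\mbb{U})$, with norm bounded by $C\sum_k\norm{g_k}_{W^{s,2}(\mbb{X})}$.

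\emph{Step 2: composition.} The bound splits into two parts.
\begin{enumerate}[(a)]
\item Multiplication by $q_{kj}(0,\cdot)$ is bounded on $W^{s,2}(\mbb{U})$, provided $q_{kj}\in C^s$. This is the regularity we will actually use; the lemma writes $\rg{C}$, which should be read as $\rg{C}^{s}$ or $\rg{C}^{s+1}$ as in Theorem~\ref{th:model}.
\item Composition $h\mapsto h\circ f_u$ is bounded from $W^{s,2}(\mbb{X})$ to $W^{s,2}(\mbb{U})$.
\end{enumerate}
For (b), apply the chain rule (Fa\`a di Bruno). Every derivative $\partial^\alpha(h\circ f_u)$ with $|\alpha|\le s$ is a finite sum of terms $(\partial^\beta h)\circ f_u$, $|\beta|\le|\alpha|$, multiplied by products of derivatives of $f_u$. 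These multipliers are bounded on compact $\mbb{U}$ by smoothness. It then remains to bound $\norm{(\partial^\beta h)\circ f_u}_{L^2(\mbb{U})}$. Here assumption (iii) replaces the Jacobian nondegeneracy used for $\Lambda_1$: it says the pushforward of Lebesgue measure on $\mbb{U}$ under $f_u$ is dominated by $c$ times Lebesgue measure on $\mbb{X}$. Therefore
\[
\int_{\mbb{U}}|w(f_u(u))|^2\,\xD{u}\le c\int_{\mbb{X}}|w(x)|^2\,\xD{x}
\]
for all $w\in L^2(\mbb{X})$, first for simple functions and then by density. Applying this with $w=\partial^\beta h$ gives (b).

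\emph{Step 3: the main obstacle.} The difficulty is making (b) rigorous when $h$ is only a Sobolev function, so the chain rule does not apply literally. The plan is to prove the estimate first for $h\in C^\infty(\overline{\mbb{X}})$, which is dense in $W^{s,2}(\mbb{X})$ on Lipschitz domains. The $L^2$ pushforward bound then makes $h\mapsto h\circ f_u$ continuous from $L^2$ to $L^2$, so composition commutes with limits and the weak derivatives of $h\circ f_u$ are the limits of the classical ones. This extends the estimate to all $h$.

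A related point: $f_u(\mbb{U})$ must lie in $\mbb{X}$ for the composition to be defined. This is implicit in the setup of \eqref{eq:system} and will be assumed.

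Combining the bounds gives
\[
\norm{\Lambda_2 g}_{\rg{\mc{H}}_u}^2\le C\sum_k\norm{g_k}^2=C\norm{g}^2,
\]
which is the required boundedness; linearity of $\Lambda_2$ is clear.
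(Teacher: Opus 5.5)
Your proposal is correct and follows essentially the same route as the paper: factor $g\circ f_u=\sum_j e_j\bigl(\sum_k q_{kj}(0,\cdot)\,(g_k\circ f_u)\bigr)$ using the linear--radial structure, expand $\partial^\alpha(g_k\circ f_u)$ by the chain rule, and read assumption (iii) as a domination of the pushforward of Lebesgue measure, so that you can change variables back to $\mbb{X}$. Your additions go beyond the paper's sketch, which uses these points implicitly or glosses over them: the explicit multiplier bound for $q_{kj}(0,\cdot)$, the density argument that justifies the chain rule for Sobolev $g_k$, and your remarks that $\rg{C}$ must be read as $\rg{C}^s$ and that $f_u(\mbb{U})\subset\mbb{X}$ is needed.
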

\begin{proof}
For any $g \in \rg{\mc{H}}_x$, we will verify that $g \circ f_u \in \rg{\mc{H}}_u$. It can be verified this composition can be written as:
$$ \textstyle (g \circ f_u) = \sum_{i=1}^mu_i\sum_{k=1}^ne_k\left( q_i(0,u) \right)g_k\left( f_u(u) \right). $$
Since by the second assumption, all $q_i \in C^s(\mbb{X}\times\mbb{U})$, this composition will be in $\rg{\mc{H}}_u$ if $g_k \circ f_u \in \rg{\mc{H}}_u$. By the definition of Sobolev spaces, 
\begin{equation*}
    \textstyle \norm{ g_k \circ f_u }_{H^s(\mbb{X})}^2 \leq c' \sup_{\lvert\alpha\rvert\leq s} \int_\mbb{U} \left\lvert \partial^\alpha g_k(f_u(u)) \right\rvert^2 \mr{d}u,
\end{equation*}
where $\alpha$ is a multi-index and $c'>0$. Using the chain rule, this partial derivative is expanded into a finite sum of terms, each containing a product of $\partial^\gamma g_k$ (for a multi-index $\lvert \gamma \rvert < \lvert \alpha \rvert$) and $\partial^\beta f_u$ (for some multi-indices $\lvert \beta \rvert < \lvert \alpha \rvert$). Again, with $f \in \rg{C}^s(\mbb{X}\times\mbb{U})$, this is bounded by $\sup_{\lvert \alpha \rvert\leq s} \int_\mbb{U} \left\lvert \partial^\alpha g_k(f_u(u)) \right\rvert^2 \mr{d}u$. Then, by condition (iii), a change of variables can be performed to yield $\int_{\mbb{X}}\lvert \partial^\alpha g_k(x)\rvert^2 \mr{d}\mu(x)$, where $\mu(S)=\operatorname{mes} f_u^{-1}(S)$ for any Borel set $S \subset \mbb{X}$. 
By the third assumption, this is linearly bounded by $\lVert g_k \rVert_{\rg{\mc{H}}_x}^2$.
\end{proof} 

\begin{lemma}
    Suppose that (i) $\mbb{X}\subset \mR^{n}$ and $\mbb{U}\subset \mR^{m}$ are compact and star-convex with respect to the origin, (ii) $f \in [\rg{C}^{s+1}(\mbb{X}\times\mbb{U})]^n$  with $s>(d_x+d_u)/2$, and (iii) for every Borel set $S\subset \mbb{X}$, $\operatorname{mes} f^{-1}(S)\leq c \operatorname{mes} S$. Then the operator $\Lambda_3: \rg{\mc{H}}_x^s \rightarrow \rg{\mc{H}}_x^s\otimes \rg{\mc{H}}_u^s : g \mapsto g \circ f - g \circ f_x - g \circ f_u$ is a bounded linear operator, where $f_x(x)=f(x,0)$ and $f_u(u)=f(0,u)$.
\end{lemma}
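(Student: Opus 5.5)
The approach is a Hadamard-type (integral mean-value) decomposition that exploits star-convexity. Fix $g\in\rg{\mc{H}}_x$ and write $h(x,u):=g(f(x,u))-g(f(x,0))-g(f(0,u))$. Since $f(0,0)=0$ (every element of $\rg{C}^{s+1}$ vanishes at the origin) and $g(0)=0$, we have $h(x,0)=0$ and $h(0,u)=0$ identically. Star-convexity of $\mbb{X}$ and $\mbb{U}$ with respect to the origin lets us apply the fundamental theorem of calculus along the segments $t\mapsto tx$ and $\tau\mapsto\tau u$, which stay in the domain. Doing so twice gives
\begin{equation*}
    h(x,u)=\sum_{k=1}^n\sum_{i=1}^m x_k u_i \int_0^1\!\!\int_0^1 \partial_{x_k}\partial_{u_i}(g\circ f)(tx,\tau u)\,\mr{d}t\,\mr{d}\tau =: \sum_{k,i} e_k(x)\,e_i(u)\,h_{ki}(x,u).
\end{equation*}
This has exactly the structure of the tensor product $\rg{\mc{H}}_x\otimes\rg{\mc{H}}_u$. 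That space is the RKHS of the kernel $(x^\top x')(u^\top u')\rho(|x-x'|)\varrho(|u-u'|)$, so by the same argument as in the lemma of \cite{tang-ye2025koopman} its elements are exactly the functions $\sum_{k,i}x_ku_i\,h_{ki}$ with $h_{ki}\in\mc{H}_x\otimes\mc{H}_u$. The norm is $\sum_{k,i}\|h_{ki}\|^2$.

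The next step is to identify $\mc{H}_x\otimes\mc{H}_u$. Since it is generated by the product of radial Sobolev kernels, it is a Sobolev space of dominating mixed smoothness on $\mbb{X}\times\mbb{U}$. Its norm is controlled by $L^2$ norms of $\partial_x^\alpha\partial_u^\beta$ with $|\alpha|,|\beta|\le s$. That norm is in turn dominated by the isotropic $H^{2s}$ norm, or one can simply bound the $H^{s}(\mbb{X}\times\mbb{U})$-type quantities and use $s>(d_x+d_u)/2$ to embed into continuous functions. It therefore suffices to show $\|h_{ki}\|\lesssim\|g\|_{\rg{\mc{H}}_x}$. Differentiating under the integral, each derivative $\partial^\alpha_x\partial^\beta_u h_{ki}$ is an average over $(t,\tau)$ of derivatives of $g\circ f$ evaluated at $(tx,\tau u)$. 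The chain rule (Faà di Bruno) expands these into finite sums of products of $(\partial^\gamma g)\circ f$ with derivatives of $f$ up to order $s+1$. The extra derivative spent in the Hadamard step is why $f\in\rg{C}^{s+1}$ is assumed, and the derivatives of $f$ are bounded on the compact domain. Writing $g=\sum_k e_kg_k$, the derivatives of $g$ reduce to those of the $g_k\in\mc{H}_x\cong H^s(\mbb{X})$ by Lemma \ref{lem:wendland}, multiplied by smooth polynomial factors.

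The remaining task is to bound $\int_0^1\!\int_0^1\!\int_{\mbb{X}\times\mbb{U}}|(\partial^\gamma g_k)(f(tx,\tau u))|^2\,\mr{d}x\,\mr{d}u\,\mr{d}t\,\mr{d}\tau$ by $\|g_k\|^2_{H^s}$. For fixed $(t,\tau)$, substitute $(x',u')=(tx,\tau u)$. This introduces a Jacobian $t^{-n}\tau^{-m}$, which is singular as $t,\tau\to0$, and I expect this to be the main obstacle. To handle it I would first try the measure condition (iii), which gives $\operatorname{mes}f^{-1}(S)\le c\operatorname{mes}S$. After the change of variables the inner integral is then bounded by $c\,t^{-n}\tau^{-m}\|\partial^\gamma g_k\|^2_{L^2}$, intersected with the scaled domain. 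This is not integrable, so instead of the naive substitution I would estimate in the sup norm wherever $|\gamma|$ is small, using the Sobolev embedding with $s>(d_x+d_u)/2$. Alternatively, I would move Hadamard weights onto the terms: replace the double integral by a single integral in only the variable needing it, or use Hardy's inequality for the averaging operator $F\mapsto\int_0^1F(t\,\cdot)\,\mr{d}t$, which is bounded on $L^2$ of a star-shaped domain when $d\ge 3$ and with a weight in general. The top-order terms $|\gamma|=s$ then appear only as $L^2$ quantities controlled via (iii) and Hardy. Combining the bounds gives $\|\Lambda_3 g\|_{\rg{\mc{H}}_x\otimes\rg{\mc{H}}_u}\le C\|g\|_{\rg{\mc{H}}_x}$. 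Linearity of $\Lambda_3$ is evident.
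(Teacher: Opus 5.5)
There is a genuine gap. It lies in the derivative count, not only in the Jacobian singularity you flagged.

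In $h_{ki}=\int_0^1\!\int_0^1\partial_{x_k}\partial_{u_i}(g\circ f)(tx,\tau u)\,\mr{d}t\,\mr{d}\tau$, both Hadamard derivatives land on $g$ through the chain rule, since $\partial_{x_k}(g\circ f)=\sum_l(\partial_l g)(f)\,\partial_{x_k}f_l$. The hypothesis $f\in\rg{C}^{s+1}$ does not pay for them. You rightly note that $\mc{H}_x\otimes\mc{H}_u$ has dominating mixed smoothness, so you must control $\partial_x^\alpha\partial_u^\beta h_{ki}$ in $L^2$ for all $|\alpha|,|\beta|\le s$. Because $f$ couples $x$ and $u$, this produces terms $(\partial^\gamma g_k)\circ f$ with $|\gamma|$ up to $2s+2$; even an isotropic $H^s$ bound would involve $|\gamma|$ up to $s+2$. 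Only $g_k\in H^s$ is available. So the claim that the top-order terms have $|\gamma|=s$ is false. Condition (iii), Sobolev embedding and Hardy-type averaging cannot repair this, since they only transport $L^2$ norms of derivatives you already control.

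Your fallback of bounding $H^s(\mbb{X}\times\mbb{U})$-type quantities points the wrong way. $H^s(\mbb{X})\otimes H^s(\mbb{U})$ is strictly smaller than isotropic $H^s(\mbb{X}\times\mbb{U})$, so an isotropic bound does not bound the tensor-product norm. Separately, the ray average $F\mapsto\int_0^1F(t\,\cdot)\,\mr{d}t$ is bounded on $L^2$ only in dimension one. Testing it on the indicator of a small ball centered at the origin shows this, so the Hardy remedy is misstated too.

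No sharper estimate can close this, because the statement fails as written. Take $n=m=1$, $\mbb{X}=\mbb{U}=[-1,1]$ and $f(x,u)=(x+u)/2$, which satisfies (i)--(iii) with $c=4$. Take an integer $s\ge 2$ and $g(y)=y\,g_1(y)$ with $g_1\in H^s$ but $g_1\notin H^{2s}$ near $y=3/4$. On $[1/2,1]^2$ the factor $xu$ is smooth and bounded below. So $\Lambda_3 g=xu\,h$ with $h\in\mc{H}_x\otimes\mc{H}_u$ would force $\partial_x^s\partial_u^s\Lambda_3 g=4^{-s}g^{(2s)}\bigl((x+u)/2\bigr)\in L^2$ there, a contradiction.

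For comparison, the paper uses the factorization $f_i=\sum_k x_k p_{ki}+\sum_j u_j q_{ji}$ and $g=\sum_i e_i g_i$ to write $g\circ f=\sum_k x_k\bar p_k+\sum_j u_j\bar q_j$. A single Hadamard step then suffices (in $u$ for $\bar p_k$, in $x$ for $\bar q_j$). The coefficients $\bar s_{kj}$ therefore involve only first derivatives of $g_i\circ f$, one fewer than your double step. It then bounds the tensor-product norm by an isotropic $H^s$ integral and asserts that only $|\gamma|\le s$ occurs, although $\bar s_{kj}$ already contains first derivatives of $g_i\circ f$. It also applies (iii) to $g_r(f(x,\tau u))$ without accounting for the rescaling. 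So it passes over exactly the obstacles you identified. The lemma needs either substantially more regularity of $g$ than $H^s$ or a weaker norm on the target space.
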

\begin{proof}
The composition $g \circ f$ can be written as
\begin{small}
\begin{equation*}
    g(f(x,u)) = \sum_{i=1}^{n} \left[ \sum_{k=1}^{n}x_i p_{ki}(x,u) + \sum_{j=1}^{m}u_j q_{ji}(x,u) \right] g_i(f(x,u)).
\end{equation*}
\end{small}

Defining functions $\bar p_k(x,u) := \sum_{i=1}^{n} e_i\left( p_{i}(x,u) \right) g_i(f(x,u))$ and $\bar q_j(x,u) := \sum_{i=1}^{n} e_i \left( q_{j}(x,u) \right) g_i(f(x,u))$, the composition $g \circ f - g \circ f_x - g \circ f_u$ can be expressed with
\begin{small}
\begin{equation*}
    (\Lambda_3g)(x,u)= \sum_{k=1}^{n} x_k \left[ \bar p_k(x,u)-\bar p_k(x,0) \right] + \sum_{j=1}^{m} u_j \left[ \bar q_j(x,u)-\bar q_j(0,u) \right]. 
\end{equation*}
\end{small}
By condition (i), $\bar p_k(x,u) - \bar p_k(x,0) = \sum_{j=1}^{m} u_j \int_0^1 \frac{\partial \bar p_k}{\partial u_j}(x,\tau u) \mr{d}\tau$ (with a similar result for $\bar q_j(x,u)$). Thus, this composition becomes $(\Lambda_3g)(x,u) = \sum_{k=1}^{n} \sum_{j=1}^{m} x_k u_j \bar s_{kj}(x,u)$, where $\bar s_{k,j}$ is the integral of first order derivatives of $\bar p_k$ and $\bar q_j$. It remains to show that for each $k$ and $j$, $\bar s_{kj} \in \rg{\mc{H}}_x \otimes \rg{\mc{H}}_u$. By the definitions of $\bar p_k$ and $\bar q_j$, their first derivatives are finite sums of products involving $p_{k i}$, $q_{j i}$, $f$, $g_i$ composed with $f$, and all of their first derivatives. We next estimate each coefficient $\bar s_{kj}$ in the tensor product Sobolev norm. For some $c'>0$,
\begin{equation*}
    \textstyle \lVert \bar s_{kj} \rVert_{H^s(\mbb{X})\otimes H^s(\mbb{U})}^2 \leq c' \sup_{\lvert \alpha\rvert\leq s} \int_{\mbb{X}\times\mbb{U}} \left\lvert \partial^\alpha \bar s_{kj}(x,u) \right\rvert^2 \mr{d}x\mr{d}u,
\end{equation*}
where $\alpha$ is a multi-index. By repeated application of the chain rule, each derivative $\partial^\alpha \bar s_{kj}$ is a finite sum of terms involving products of bounded derivatives of $f$, $p_{k r}$, and $q_{jr}$ with derivatives of the form $\partial^\gamma g_r(f(x,u))$, where $\lvert\gamma\rvert\leq s$. 
From condition (ii), $f\in\rg{C}^{s+1}(\mbb{X}\times\mbb{U})$ and thus $p_{k r}$ and $q_{jr}$ have the required bounded regularity, this gives
\begin{equation*}
    \textstyle \lVert \bar s_{kj} \rVert_{H^s(\mbb{X})\otimes H^s(\mbb{U})}^2 \lesssim c'  \sup_{\lvert\gamma\rvert\leq s} \int_{\mbb{X}\times\mbb{U}} \left\lvert \partial^\gamma g_r(f(x,u)) \right\rvert^2 \mr{d}x\mr{d}u
\end{equation*}
for some $c'>0$. By condition (iii) on the measure induced by $f$, with the same rationale as the previous lemma, 
\begin{equation*}
    \textstyle \lVert \bar s_{kj} \rVert^2_{H^s(\mbb{X})\otimes H^s(\mbb{U})} \leq c'\lVert g\rVert^2_{\rg{\mc{H}}_x}
\end{equation*}
for some $c'>0$. This result holds for all $k$ and $j$, and therefore $\Lambda_3$ is indeed a bounded linear operator. 
\end{proof}

\begin{proof}[Proof of Theorem \ref{th:model}]
    It suffices to prove that there exists an operator $M^*$ (which is the adjoint of the desired $M$) such that for any $g\in \rg{\mc{H}}_x$, $g(f(x, u)) = g(x^+) = \ip{g}{\phi(x^+)} = \ip{M^*g}{(\rg\phi_x, \rg\varphi_u, \rg\phi_x\otimes \rg\varphi_u)}$, for which we need three operators $M_1^*$, $M_2^*$, and $M_3^*$ such that $(M_1^*g)(x) + (M_2^*g)(u) + (M_3^*g)(x, u) = g(x^+)$. From the above lemmas we see that these are exactly $\Lambda_1$, $\Lambda_2$, and $\Lambda_3$. 
\end{proof}

\subsection{Learning from Snapshot Data}
\par Assume that the model \eqref{eq:bilinear} is well-defined. Given data triplets $\left\{ (x_i, u_i, y_i) \right\}_{i=1}^N$ where $y_i = f(x_i, u_i)$, the goal is to learn an operator $\hat M$ that acts as an approximate Perron--Frobenius operator, that is, $M\rg\psi_{(x_i,u_i)}$ matches $\rg\phi_{y_i}$ over the sample. This can be formulated as the following convex learning problem over Hilbert--Schmidt operators:
\begin{equation*}
    \min_{M \in \mr{HS}\left( \rg{\mc{H}}_{xu},\rg{\mc{H}}_x \right)} \frac{1}{N} \sum_{i=1}^N \left\lVert M\rg\psi_{(x_i,u_i)}-\rg\phi_{y_i} \right\rVert_{\rg{\mc{H}}_x}^2 + \beta \left\lVert M \right\rVert_{\mr{HS}}^2,
\end{equation*}
where $\beta>0$ is a regularization parameter and
$\rg{\mc{H}}_{xu} := \rg{\mc{H}}_x \oplus \rg{\mc{H}}_u \oplus (\rg{\mc{H}}_x \otimes \rg{\mc{H}}_u)$ is the Hilbert space associated with the feature $\rg{\psi}_{(x,u)}=(\rg{\phi}_x, \rg{\vf}_u, \rg{\phi}_x \otimes \rg{\vf}_u)$. 

\par By the representer theorem for learning on RKHS, and the fact that the space of Hilbert--Schmidt operators is Hilbert, the optimizer must have the form 
$$\textstyle M=\sum_{i=1}^N\sum_{j=1}^N \hat\theta_{ij}  \, \rg\phi_{y_i} \times \rg\psi_{(x_j,u_j)} , $$
where $\rg\phi_{y_i} \times \rg\psi_{(x_j,u_j)}$ is a rank-$1$ operator that maps any $\rg\psi_{(x,u)}$ to $\ip{\rg\psi_{(x_j,u_j)}}{\rg\psi_{(x,u)}}\rg\phi_{y_i}$. Thus, the optimization problem reduces to solving for a matrix $\hat\Theta = \left[ \hat\theta_{ij} \right]$. 
Let us denote $G_{\rg\psi} := [G_{\rg\psi, ij}]$ as the Gram matrix for the features $\psi_{(x,u)}$, in which $G_{\rg\psi, ij} = \ip{\rg\psi_{(x_i,u_i)}}{\rg\psi_{(x_j,u_j)}} = \rg\kappa (x_i,x_j) + \rg\vk(u_i, u_j) + \rg\kappa(x_i,x_j) \rg\vk(u_i, u_j)$, and $G_{\rg\phi} := [G_{\rg\phi, ij}]$, in which $G_{\rg\phi, ij} = \ip{\rg\phi_{y_i}}{\rg\phi_{y_j}} = \rg\kappa(y_i,y_j)$. It can be seen that the problem boils down to the follows:
\begin{equation}
    \min_{\Theta\in \mR^{N\times N}} \,
    \mr{tr}\bra{\frac{1}{N}G_{\rg\psi}^\top \Theta^\top G_{\rg\phi} \Theta G_{\rg\psi} - \frac{2}{N}  G_{\rg\psi}^\top \Theta^\top G_{\rg\phi} + \beta \Theta G_{\rg\psi}^\top \Theta^\top G_{\rg\phi}}
\end{equation}
which is indeed a convex optimization problem and actually allows an explicit solution through solving a generalized eigenvalue problem \cite{kostic2022learning, tang2025koopman}, which we omit for brevity here.

\subsection{Example}\label{subsec:example}
\par We consider the following example:
$$x_{t+1} = 0.95x_t - 0.2x_t^3 + u_t,$$
with state region $\mbb{X} = [-1, 1]$ and input space $\mbb{U} = [-1, 1]$. 
The operator $\hat{M}$ in the model \eqref{eq:bilinear} is learned from $N = 300$ i.i.d.\ uniform snapshots drawn from $\mathbb{X} \times \mathbb{U}$, and evaluated on $N_{\mathrm{test}} = 500$ test points.
The left subfigure of Fig.~\ref{fig1} is the training and testing data in this experiment. 
The right subfigure shows the predicted next state $\widehat{x^+}$ versus the true next state $x^+$ over the test set. The predictions $\widehat{x^+}$ align closely with the underlying true next state with a root mean squared error of $7.51 \times 10^{-3}$, verifying the accuracy of the learned operator. 
Here, the estimation is made by $\widehat{x^+} = \ip{e}{\hat{M} \rg\psi_{(x,u)}}$. 

\par As proven in \cite[Theorem 2.4]{bold2025kernel}, when the Koopman operator is estimated based on the radial Sobolev kernel, then for any function $g\in H^s(\mbb X)$, its estimated next-time value $\ip{g}{K^*\phi_x}$ with the Perron--Frobenius operator has an error bounded by a multiple of the fill distance to the power of $s-d/2$, and such an error is uniform over $x\in \mbb{X}$. 
Following the same idea, it is not hard to reason in our setting that, if using the linear--radial kernel instead of the radial kernel, such an error is proportional to $|(x,u)|$ multiplied by the fill distance to the power of $s-(d_x+d_u)/2$. 
Because the fill distance scales at the rate of $N^{-1/(d_x+d_u)}$, we have
$$|\hat{x}-x|\leq C |(x,u)|, \enspace C=O\bra{ N^{-\bra{\frac{s}{d_x+d_u} - \frac{1}{2}}} }. $$
The left subfigure of Fig.~\ref{fig3} visualizes the prediction error
$x_i^+ - \hat{x}_i^+$ at each test point $(x_i, u_i)$ as scatter
points, alongside the error bound surfaces $\pm C\left|(x_i, u_i)\right|$.
The bound constant $C$ is estimated empirically as the maximum $|x_i-\hat{x}_i|/|(x_i, u_i)|$ over the sample, which we obtain $0.1368$ as its value. 
Having used $s=(d_x+d_u)/2+k+1/2$ ($k=2$) \cite{wendland2004scattered} in this experiment, we have theoretically $C=O(N^{-5/4})$. The right subfigure of Fig.~\ref{fig3} shows the variation of $C$ as a function of training size $N \in \{50, 100, 150, 200, 250, 300\}$, in comparison to the gray dashed reference line $N^{-5/4}$.

\begin{figure}
    \centering
    \includegraphics[width=\linewidth]{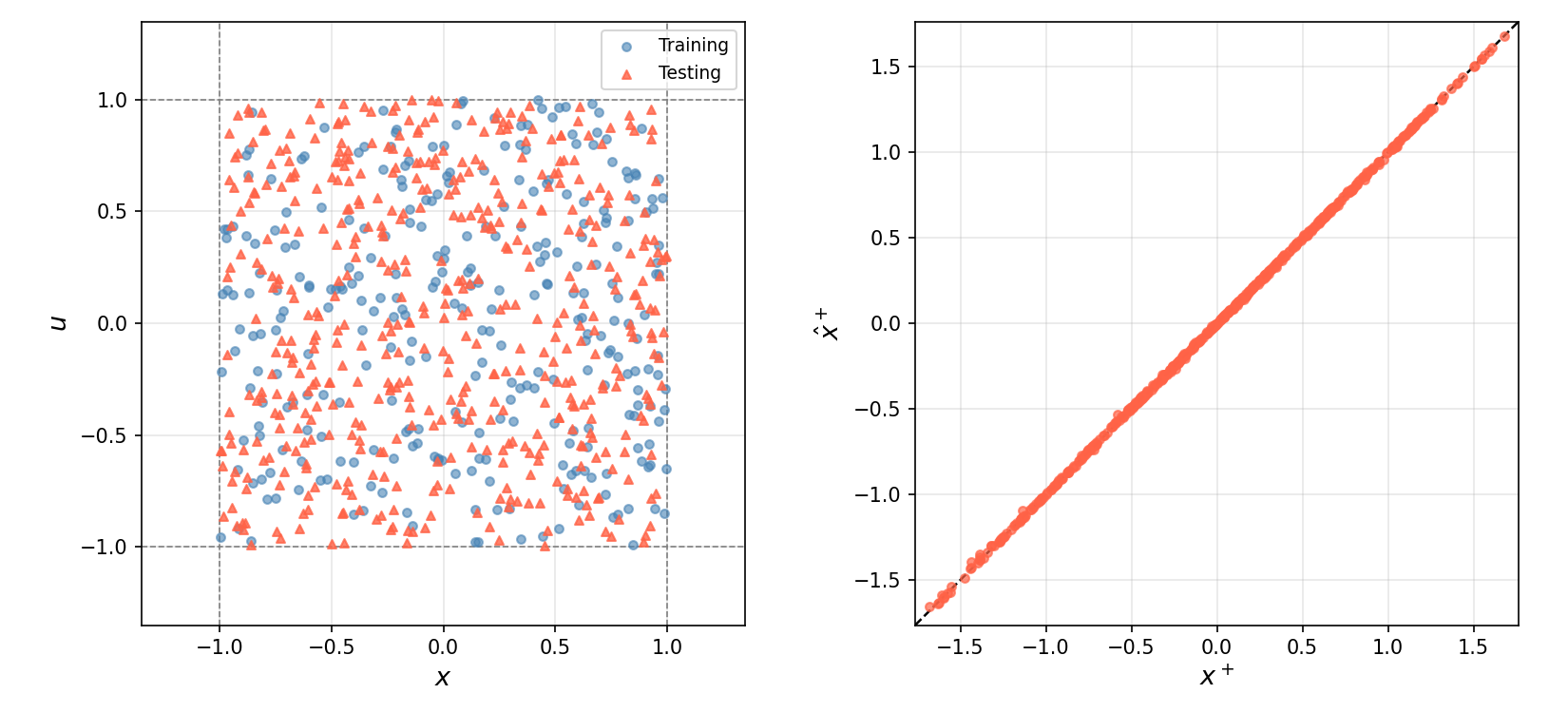}
    \caption{(Left) Training and testing data points $(x, u)$; (Right) The predicted state $\hat{x}^+$ and underlying $x^+$ using the learned operator.} 
    \label{fig1}
\end{figure}
\begin{figure}
    \centering
    \includegraphics[width=\linewidth]{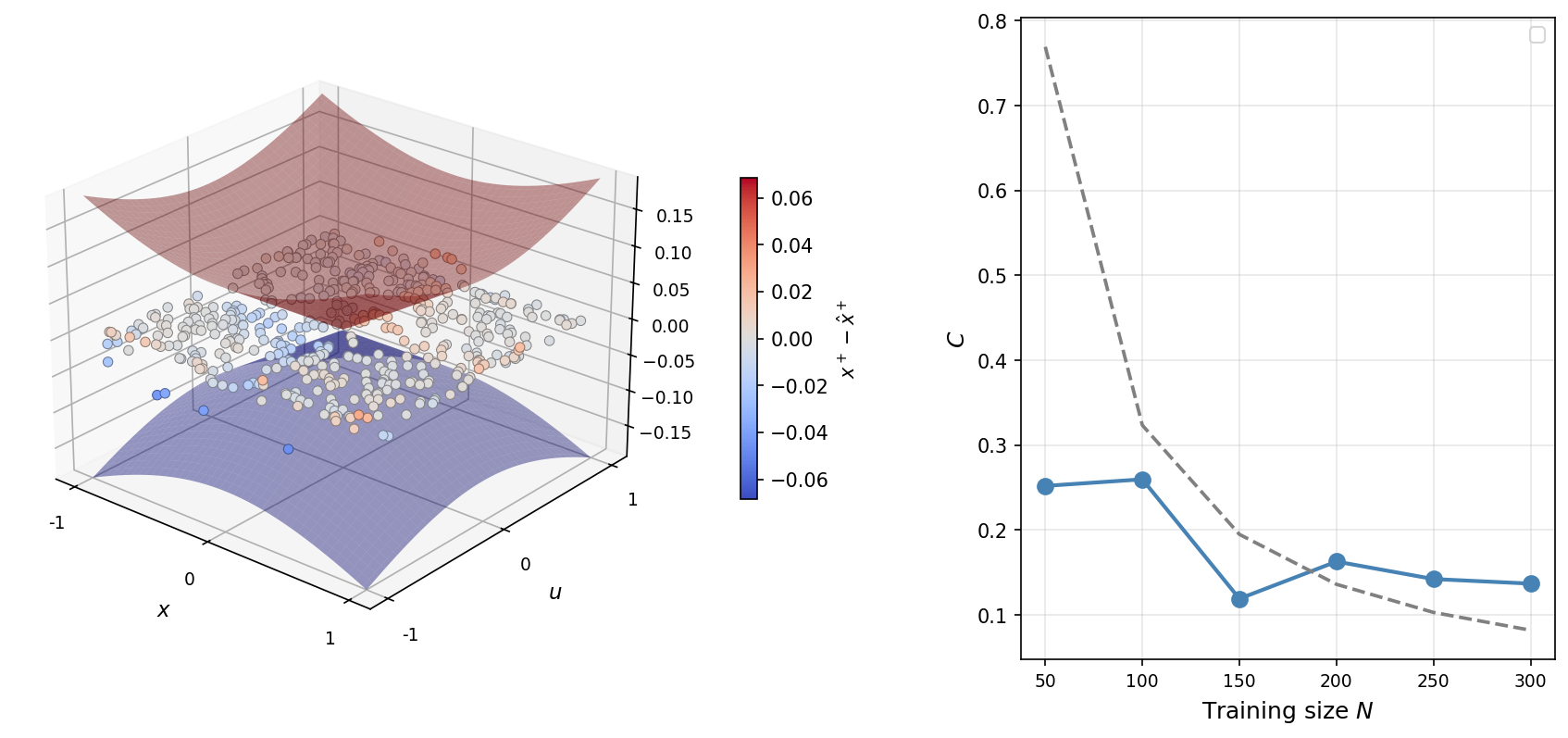}
    \caption{(Left) The prediction error $\hat{x}^+ - x^+$ and error bound at $N=300$; (Right) The empirical $C$ in terms of the training data size $N$.} 
    \label{fig3}
\end{figure}

\section{Koopman-Based Stabilizing Control under Bilinearity and Input Lifting}\label{sec:stabilization}
The advantage of having a model like \eqref{eq:bilinear} where the inputs and states are separately lifted into two RKHSs is to facilitate the formulation of control problems. Here, we consider the problem of synthesizing a Lyapunov-based controller. 
Specifically, we suppose that a Lyapunov function $v: \mbb{X}\to \mR_+$, $v(x) = \ip{\rg\phi_x}{P\rg\phi_x}$ is assigned, where $P \in \mr{HS}_+(\rg{\mc{H}}_x)$ and look for a feedback control law $u = \alpha(x)$ with certain regularity conditions, such that the Lyapunov function decreases along the closed-loop dynamics faster than a prescribed decay rate function $q(x) = \ip{\rg\phi_x}{Q\rg\phi_x}$, where $Q \in \mr{HS}_+(\rg{\mc{H}}_x)$.\footnote{
The problem can, of course, be posed in alternative ways, such as requiring an  linear rate of decay, or maximizing the coefficient $c\geq 0$ under a required decay rate of $cq(x)$. }

\subsection{Lyapunov Descent under Stochastic Feedback Laws}
The problem, hence, is to find a law $\alpha$ such that 
$$\ip{M\rg\psi_{(x, \alpha(x))}}{PM\rg\psi_{(x, \alpha(x))}} \leq \ip{\rg\phi_x}{(P-Q)\rg\phi_x}, \, \forall x\in \mbb{X}.$$
Such a problem, however, suffers from the issue that the left-hand-side term and the right-hand-side term cannot be agglomerated due to the nonlinear input feature map $\rg\vf$ contained in $\rg\psi$. 
In other words, even if for every $\rg\phi_x$ we can find an element in $\rg{\mc H}_u$ that satisfies the Lyapunov inequality, we do not have a criterion to verify that this element has a corresponding finite-dimensional admissible control action $u\in \mbb{U}$. 
Essentially, the difficulty here is the lack of convexity in nonlinearly constrained optimization problems. 

\par Motivated by the wide use of nonconvex optimization in machine learning problems and the need to interpret the trained models' behavior, a ``continuum'' perspective, viewing the nonconvex problem as the optimization over probability distributions, has been increasingly adopted in the recent literature \cite{marteau2020non, rudi2025finding}. 
Specifically in our problem, we consider the search of a stochastic feedback law, where given any $x$, $u$ is a random vector, instead of a deterministic mapping, where $u$ is fixed under $x$. With such a consideration, by writing the conditional distribution of input as the probability measure $\mu(\cdot|x)$, we write the Lyapunov function value at the succeeding time averaged over all succeeding states as 
$$\textstyle \mbb{E}[v(x^+)] = \int_{\mbb{U}} \ip{M\rg\psi_{(x,u)}}{PM\rg\psi_{(x,u)}} \mu(\xD{u}|x). $$
The search is then performed on such conditional distributions $\mu(\cdot|\cdot)$ defined for $x\in \mbb{X}$ and $u\in \mbb{U}$. 

\par The resulting problem is clearly convex when regarding the measure $\mu$ as the decision, since it is searched in a linear space subject to linear constraints. 
In fact, none is lost by randomizing the feedback law, since if for any $x$, a deterministic feedback law $u=\alpha(x)$ achieves the Lyapunov descent, then the Dirac conditional distribution $\mu(\cdot|x)=\delta(\cdot-\alpha(x)|x)$ over $\mbb{U}$ satisfies the need. 
Vice versa, if a condition distribution $\mu(\cdot)$ achieves the descent, then there must be an appropriate value for $u$ under every $x$, which forms a deterministic feedback law. 
\begin{problem}\label{prob:stabilization.distribution}
    Find a conditional distribution $\mu(\xD{u}|x)$ such that for all $x\in \mbb{X}$, we have 
    \begin{equation}\label{eq:descent.distribution}
        \textstyle \int_{\mbb{U}} \ip{M\rg\psi_{(x,u)}}{PM\rg\psi_{(x,u)}} \mu(\xD{u}|x) \leq \ip{\rg\phi_x}{(P-Q)\rg\phi_x} .  
    \end{equation}
\end{problem}

\subsection{Optimization over Conditional Distributions}
For a tractable approximation of \eqref{eq:descent.distribution}, we must perform a discretization of the conditional distribution $\mu(\xD{u}|x)$. That is, dependent on $x$, we should choose a sufficiently large amount of points $\{u_j(x)\}_{j=1}^{N_j(x)} \subset \mbb{U}_x$ and let $\mu(\xD{u}|x) = \sum_{j=1}^{N_j(x)} w_j(x) \delta_{u_j(x)}$, with all $w_j(x)$ being nonnegative and summing up to $1$, thus converting the condition \eqref{eq:descent.distribution} to 
$$ \textstyle \sum_{j=1}^{N_j(x)} \ip{M\rg\psi_{(x,u_j(x))}}{PM\rg\psi_{(x,u_j(x))}} w_j(x) \leq \ip{\rg\phi_x}{(P-Q)\rg\phi_x}. $$
Denoting $f(x, u_j(x))=:x_j^+$, the above inequality becomes
\begin{equation}\label{eq:descent.x.pointwise}
    \textstyle  \sum_{j=1}^{N_j(x)} \ip{\rg\phi_{x_j^+}}{P \rg\phi_{x_j^+}} w_j(x) - \ip{\rg\phi_x}{(P-Q)\rg\phi_x} \leq0.    
\end{equation}

To analyze the soundness of such a discretization on $\mbb{U}$, we should bound the possible loss when clipping a general distribution (given $x$) onto a discrete distribution. 
Such a loss clearly depends on the Lipschitz continuity of the Lyapunov function $v(f(x, u))$ on $u \in \mbb{U}_x$ (denoted as $\mr{lip}(v(f(x, \cdot)), \mbb{U}_x)$) and the fill distance of $\{u_j(x)\}$ on $\mbb{U}_x$ (denoted as $\mr{fill}(\{u_j(x)\}, \mbb{U}_x)$). 
Specifically, if a distribution $\mu(\xD{u}|x)$ supported on $\mbb{U}_x$ exists to satisfy \eqref{eq:descent.distribution}, then \eqref{eq:descent.x.pointwise} holds with its right-hand-side $0$ modified by $\mr{lip}(v(f(x, \cdot)), \mbb{U}_x) \times \mr{fill}(\{u_j(x)\}, \mbb{U}_x)$. On the contrary, if \eqref{eq:descent.x.pointwise} holds, then the discrete distribution directly meets \eqref{eq:descent.distribution}. 

\par It is necessary to preassign the shape of $\mbb{U}_x$ under any $x$. Note that $\ip{M\rg\psi_{(x, u)}}{PM\rg\psi_{(x, u)}}$ is a positive semidefinite quadratic form over $\{\rg\vf_u : u\in \mbb{U}_x\}$. If the form is further strictly positive semidefinite, then in order that this form does not exceed $\ip{\rg\phi_x}{(P-Q)\rg\phi_x}$, $\rg\vf_u$ must be norm-bounded by a constant multiple of $\|\rg\phi_x\|$. 
Recalling that $\|\rg\vf_u\| \propto |u|$ and $\|\rg\phi_x\|\propto |x|$, any input action causing Lyapunov descent at $x$ must be bounded in the ball of radius $c|x|$ centered at $0$ in $\mbb{U}$ (for some constant $c>0$). Hence, let $\mbb{U}_x = \mbb{U} \cap \mbb{B}(c|x|) \subset \mbb{U}$ satisfies the need. 
As such, the sampling strategy on $\mbb{U}_x$ can simply be scaling a fixed set of sample points on the unit ball by a constant of $c|x|$, i.e., $u_j(x) = (c|x|)\ms{u}_j$, $\{\ms{u}_j\}_{j=1}^{N_j} \subset \mbb{B}(1)$. The fill distance of $\{u_j(x)\}$in $\mbb{U}_x$ is $c|x|$ multiplied by the fill distance of $\{\ms{u}_j\}$ in $\mbb{B}(1)$, which is a constant.  

\par We further handle the $x$-pointwise constraints \eqref{eq:descent.x.pointwise} by a discretization on $\mbb{X}$. Choosing a number of sample points $\{x_i\}_{i=1}^{N_i} \subset \mbb{X}$, we postulate the following form for the weight functions $w_j(x)$:
$$\textstyle w_j(x)=\sum_{i=1}^{N_i} \theta_{ij} \varrho(x,x_i), \enspace j = 1,\cdots, N_j, $$
where $\varrho$ is a Gaussian kernel, i.e., $\varrho(x,x')=\exp(-|x-x'|^2/2\sigma^2)$ for some constant $\sigma>0$. We require $\theta_{ij}\geq 0$ for nonnegativity. We hence impose the constraint \eqref{eq:descent.x.pointwise} on the sampled points $\{(x_i, u_j(x_i), x_{ij}^+=f(x_i, u_j))\}$:
\begin{displaymath}
   \textstyle  \sum_{i'=1}^{N_i}\sum_{j=1}^{N_j} \theta_{i'j} \varrho(x_i,x_{i'}) \ip{\rg\phi_{x_{ij}^+}}{P \rg\phi_{x_{ij}^+}} - \ip{\rg\phi_{x_i}}{(P-Q)\rg\phi_{x_i}} \leq0.   
\end{displaymath}
That is, 
\begin{equation}\label{descent.x.sampled}
   \textstyle  \sum_{i'=1}^{N_i}\sum_{j=1}^{N_j} \theta_{i'j} v(x_{ij}^+) - (v(x_i)-q(x_i))\leq 0. 
\end{equation}
In addition to $\theta_{ij}\geq 0$, for normalization, we require that: 
$$\textstyle  \sum_{i'=1}^{N_i}\sum_{j=1}^{N_j} \theta_{i'j} \varrho(x_i,x_{i'}) = 1, \enspace i=1,\cdots,N_i.$$

\par Finally, by solving all the coefficients $\{\theta_{ij}\}$, we take the deterministic feedback policy as 
$$\textstyle \alpha(x) = \sum_{j=1}^{N_j} \sum_{i=1}^{N_i} \theta_{ij}\varrho(x, x_i) u_j(x). $$

\subsection{Example}
For the example problem in \S\ref{subsec:example}, setting $v(x)= x^2$ as the Lyapunov function, we consider the controller law under which the Lyapunov function decays by $q(x) = 0.19 x^2$, so that the required descent condition is
    \begin{equation*}
        v(f(x, \alpha(x))) \leq  0.81\,x^2, \quad \forall x \in \mathbb{X}.
    \end{equation*}
We sample
$N_i = 200$ state points $\{x_i\}_{i=1}^{N_i}$ uniformly and independently
from $\mathbb{X} = [-1, 1]$.
For each state $x_i$, a set of $N_j = 50$ input candidates is constructed as $u_j(x_i) = c|x_i|\,\mathsf{u}_j, \quad j = 1, \ldots, N_j$,
where $\{\mathsf{u}_j\}_{j=1}^{N_j}$ is a fixed uniform grid on $[-1, 1]$
and $c = 0.5$.

By solving linear programming problem according to~\eqref{descent.x.sampled} and setting $\sigma=0.2$ for the Gaussian kernel $\varrho(x,x')$, the feedback policy is constructed and shown in the left subfigure of Fig.~\ref{fig2}. The policy is odd-symmetric and vanishes at the origin, consistent with the equilibrium-preserving structure of the kernel. The right subfigure of Fig.~\ref{fig2} shows the closed-loop next state $x^+ = f(x, \alpha(x))$ versus $x$, with the shaded region indicating the descent band $|x^+| \leq 0.9|x|$. The closed-loop trajectory lies entirely within this band, 
confirming that the 
Lyapunov descent condition is satisfied across the entire $\mathbb{X}$. 
\begin{figure}
    \centering
    \includegraphics[width=1\linewidth]{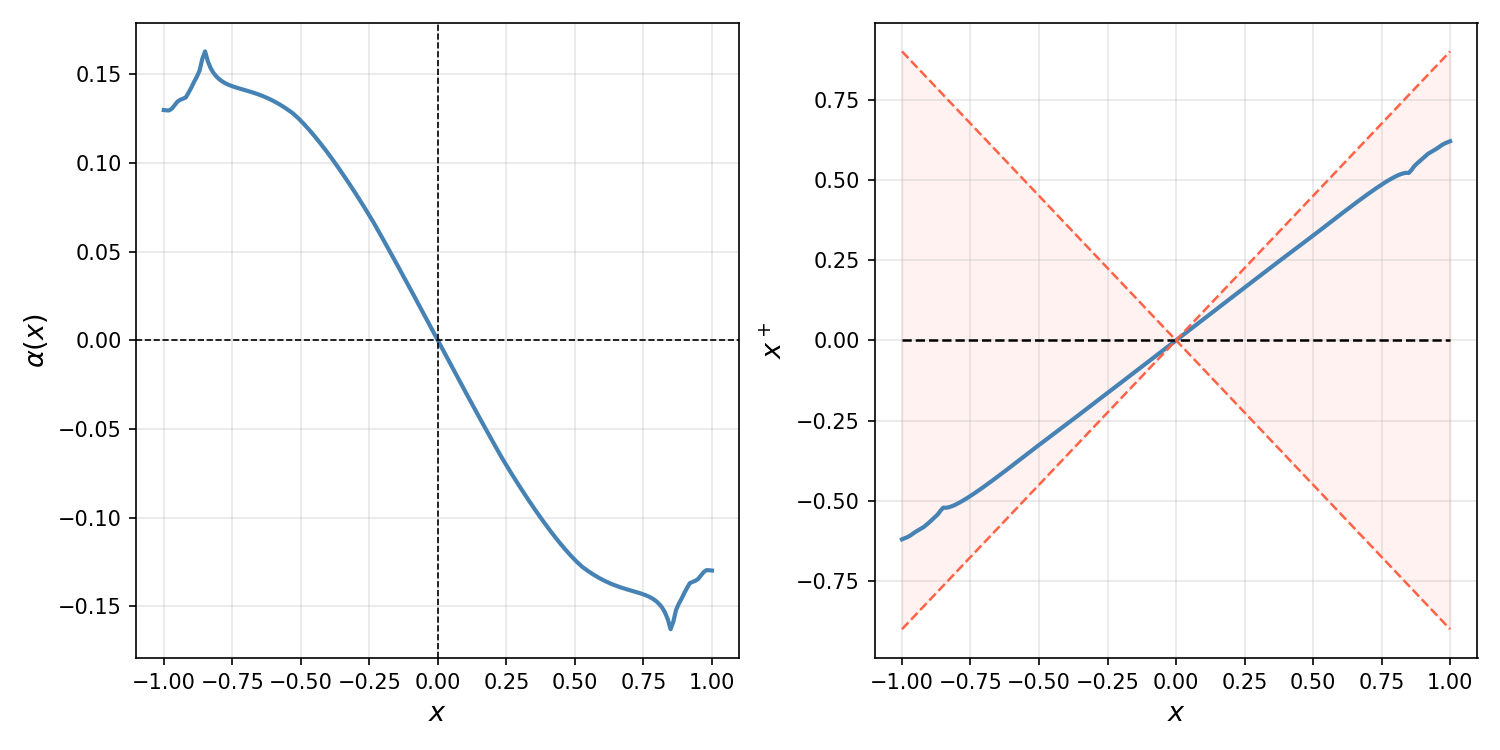}
    \caption{(Left) Control policy $\alpha(x)$; (Right) $x$ and $x^+$ under the policy.}
    \label{fig2}
\end{figure}

\section{Conclusions and Open Problems}\label{sec:conclusion}
In conclusion, the present paper has demonstrated that nonlinear discrete-time systems, under certain smoothness conditions, can be exactly represented as a bilinear system where the state and input are separately lifted into an RKHS. As the RKHSs are generated by linear--radial kernels, kernel SOS forms are naturally expressive for Lyapunov functions, thus providing the condition for Lyapunov function descent. 
The determination of a stabilizing controller, for a convex synthesis formulation, is cast as the optimization over a state-dependent probability density function over the input space. The solution approach is data-driven. 

\par Due to the nonlinear input feature map $\rg\varphi$, it appears unrealistic to tractably optimize the control law in its original deterministic formulation in finite dimensions. 
This is unavoidable in discrete-time systems, where it cannot be anticipated that the input feature map can be a linear one. 
The optimal control problem, which we have not addressed in this paper, if formulated under a kernel SOS form of stage cost, must also be solved via the ``stochastic mixture'' path. 
Structurally, this will appear similar to a continuous-time stochastic optimal control problem, whose operator-theoretic solution has been considered for stochastic differential equations with a Wiener noise \cite{houska2025convex}. 
For system \eqref{eq:bilinear}, the evolution of the density of states and inputs likely should be characterized by their averaged features (i.e., kernel mean embeddings). 

\par For continuous-time systems, one can be more optimistic about the possibility of a fully convex operator-theoretic solution to determine (approximately in a data-driven manner) the optimal controller. Formally, the Koopman model can be made a bilinear one without using input lifting. 
Echoing with our Footnote \ref{fn:1}, theoretical efforts are still needed to rigorously formulate the stabilizing control and optimal control problems. 

\bibliographystyle{ieeetr}
\bibliography{mybib}{}

\end{document}